\allowdisplaybreaks \numberwithin{equation}{section}
\numberwithin{equation}{section}
\newtheorem{theorem}{Theorem}[section]
\newtheorem{proposition}[theorem]{Proposition}
\newtheorem{lemma}[theorem]{Lemma}
\newtheorem*{Yudovich's Theorem}{Yudovich's Theorem}
\theoremstyle{definition}
\theoremstyle{remark}
\newtheorem{remark}[theorem]{Remark}
\begin{document}

\title
[Nonlinear stability of sinusoidal Euler flows on a flat two-torus]{Nonlinear stability of sinusoidal Euler flows on a flat two-torus}

 \author{Guodong Wang,  Bijun Zuo}
\address{Institute for Advanced Study in Mathematics, Harbin Institute of Technology, Harbin 150001, P.R. China}
\email{wangguodong@hit.edu.cn}
\address{College of Mathematical Sciences, Harbin Engineering University, Harbin {\rm150001}, PR China}
\email{bjzuo@amss.ac.cn}


\begin{abstract}
Sinusoidal flows are an important class of explicit stationary solutions of the two-dimensional  incompressible Euler equations on a flat torus. For such flows, the steam functions are   eigenfunctions of  the negative Laplacian. In this paper, we prove that any sinusoidal flow related to some least eigenfunction  is, up to phase translations, nonlinearly stable  under $L^p$ norm of the vorticity for any  $1<p<+\infty$, which  improves a classical stability result by Arnold based on the energy-Casimir method.  The key point of the proof is to distinguish least eigenstates with fixed amplitude  from others  by using isovortical property of the Euler equations.

\end{abstract}

\maketitle
\section{Introduction and main result}
\subsection{Two-dimensional Euler equations on a flat torus}
Let $\mathbb T^{2}$ be a flat two-torus whose fundamental domain is
\[\mathbb T^{2}=
\left[0, 2{ \pi}{\nu_{1}}\right]\times\left[0, 2{ \pi}{\nu_{2}}\right],\]
 where $\nu_1,\nu_2$ are fixed positive constants.
 The motion of an ideal fluid of unit density on  $\mathbb T^{2}$ is described by  the following Euler equations:
\begin{equation}\label{euler}
\begin{cases}
\partial_t\mathbf v+(\mathbf v\cdot\nabla)\mathbf v=-\nabla P,&\mathbf x=(x_1,x_2)\in \mathbb T^2,\,\,t>0,\\
\nabla\cdot\mathbf v=0,
\end{cases}
\end{equation}
where $\mathbf v=(v_1,v_2)$ is the velocity field, and $P$ is the scalar pressure. The \emph{scalar vorticity} $\omega$ of the fluid is given by
\begin{equation}\label{deome}
\omega=\partial_{x_{1}}v_2-\partial_{x_{2}}v_1.
\end{equation}

For smooth solutions of \eqref{euler}, the following quantities are conserved for all time (see \cite{MB, MPu}):
 \begin{itemize}
 \item [(C1)] The total flux $\mathbf F$ of velocity,
 \begin{equation}\label{c1}
\mathbf F=\int_{\mathbb T^2}\mathbf v d\mathbf x,
\end{equation}
\item [(C2)] The kinetic energy $\mathcal E,$
\begin{equation}\label{vcon1}
\mathcal E=\frac{1}{2}\int_{\mathbb T^2}|\mathbf v |^2d\mathbf x,
\end{equation}
\item [(C3)] The distribution function of  vorticity $\mathsf d_{\omega(t,\cdot)}$,
\[\mathsf d_{\omega(t,\cdot)}(s)=|\{\mathbf x\in\mathbb T^2\mid \omega(t,\mathbf x)>s\}|,\quad s\in\mathbb R,\]
where $|\cdot|$ denotes the two-dimensional Lebesgue measure.
\end{itemize}
If we denote by $\mathcal R(f)$  the set of all equimeasurable rearrangements of some  $f\in L^1_{\rm loc}(\mathbb T^2)$, i.e.,
\[ 
\mathcal R(f)=\left\{g\in L^1_{\rm loc}(\mathbb T^2)\mid \mathsf d_g =\mathsf d_f  \right\},
\]
then the conservation of the distribution function of  vorticity can also be expressed as
\begin{equation}\label{iso0}
\omega(t,\cdot)\in\mathcal R(\omega(0,\cdot)),\quad\forall\,t\geq 0.
\end{equation}
As a consequence, the $L^p$  norm of vorticity is conserved for any $1\leq p\leq +\infty$.
In particular, the \emph{enstrophy} $Z$ of the fluid, defined by
\begin{equation}\label{deoz}
Z(\omega)=\frac{1}{2}\int_{\mathbb T^{2}}\omega^{2}d\mathbf x,
\end{equation}
is conserved.

Below we introduce the vorticity-stream   formulation of Euler equations \eqref{euler}. Define the \emph{normalized velocity} $\tilde{\mathbf v}$   as  
\begin{equation}\label{nv0}
\tilde{\mathbf v}=\mathbf v- \frac{1}{|\mathbb T^{2}|}\mathbf F,
\end{equation}
where $\mathbf F$ is the total flux given by \eqref{c1}. Note that $\mathbf F$ a constant vector not depending on the time variable. 
It is clear that $\tilde{\mathbf v}$  is divergence-free and has zero   integral  over $\mathbb T^2$. 
 By the discussion on  p. 50 of \cite{MB}, there is a function  $\tilde\psi:\mathbb T^2\to\mathbb R$, called the \emph{normalized stream function}, such that
\begin{equation}\label{nsf1}
\tilde{\mathbf v}=\nabla^\perp\tilde \psi.
\end{equation}
Here and henceforth, $\mathbf b^\perp=(b_{2},-b_{1})$ denotes the clockwise rotation through $\pi/2$ of some planar vector $\mathbf b=(b_{1},b_{2})$, and $\nabla^{\perp}f=(\nabla f)^{\perp}$ for some function $f$. Without loss of generality, by adding a suitable constant, we always assume that the normalized stream function has zero integral over $\mathbb T^2$.
Then $\tilde\psi$ satisfies
\begin{equation}\label{tpsi}
\begin{cases}
-\Delta\tilde\psi=\omega,&\mathbf x\in\mathbb T^{2},\\
\int_{\mathbb T^{2}}\tilde \psi d\mathbf x=0.
\end{cases}
\end{equation} 
By our Lemma \ref{bsl99} in Section 2, the  Poisson equation \eqref{tpsi} has a unique solution, denoted by  $\tilde\psi=K\omega$. Then  the velocity $\mathbf v$ can be determined by the vorticity $\omega$ and the total flux $\mathbf F$ as follows:
\begin{equation}\label{bsl}
\mathbf v=\nabla^\perp K\omega+\frac{1}{|\mathbb T^{2}|}\mathbf F,
\end{equation}
which is usually called the Biot-Savart law.
The \emph{stream function} $\psi$ of the fluid is defined by
 \begin{equation}
 \psi=K\omega-\frac{1}{|\mathbb T^{2}|}\mathbf F^{\perp}\cdot\mathbf x
 \end{equation}
 such that $\mathbf v=\nabla^{\perp}\psi.$
According to the above discussion,  at any time the state of the fluid can be described by  $\mathbf v$, or $\psi,$ or the pair $(\omega, \mathbf F)$.

The kinetic energy  can be expressed in terms of  $\omega$ and  $\mathbf F$   as follows:
\begin{equation}\label{deoe0}
\mathcal E =\frac{1}{2}\int_{\mathbb T^2}|\nabla K\omega|^2d\mathbf x+\frac{1}{2|\mathbb T^{2}|}|\mathbf F|^{2}.
\end{equation}
Define 
\begin{equation}\label{deoe}
E(\omega)=\frac{1}{2}\int_{\mathbb T^{2}}|\nabla K\omega|^{2}d\mathbf x=\frac{1}{2}\int_{\mathbb T^{2}}\omega K\omega d\mathbf x.
\end{equation}
Then by energy conservation, $E$ is also  conserved:
\begin{equation}\label{deoe6}
E(\omega(t,\cdot))=E(\omega(0,\cdot)),\quad\forall\, t\geq 0.
\end{equation}
 
\subsection{Sinusoidal flows and Arnold's stability result}
 Stationary solutions of the Euler equations \eqref{euler}  are characterized by having
  $\nabla \psi$ and   $\nabla\omega$ collinear. For this to hold, a sufficient condition  is that $\psi$ and $\omega$ satisfy the  relation
\begin{equation}\label{rel0}
\omega=f(\psi)
\end{equation}
for some function $f:\mathbb R\to\mathbb R.$  In particular, if
$f$ is  linear,
then \eqref{rel0} becomes the eigenvalue problem of $-\Delta$ on $\mathbb T^2$:
\begin{equation}\label{rel00}
-\Delta \psi=\lambda\psi.
\end{equation}
For \eqref{rel00}, 
 any eigenvalue $\lambda$   has the form (see  \cite{Ti}, Chapter 1)
\begin{equation}\label{rel1}
\lambda=\left(\frac{k_1}{\nu_1}\right)^2+\left(\frac{k_2}{\nu_2}\right)^2
\end{equation}
 for some integers $k_1,k_2$   such that  $k_1^2+k_2^2\neq0$, and the corresponding eigenspace is  spanned by
 \begin{align*}
 \sin\left(\frac{j_1}{\nu_1}x_1 \right)\sin\left( \frac{j_2}{\nu_2}x_2 \right),\quad\sin\left(\frac{j_1}{\nu_1}x_1 \right)\cos\left( \frac{j_2}{\nu_2}x_2\right),\\
 \cos\left(\frac{j_1}{\nu_1}x_1 \right)\sin\left( \frac{j_2}{\nu_2}x_2 \right),\quad\cos\left(\frac{j_1}{\nu_1}x_1 \right)\cos\left( \frac{j_2}{\nu_2}x_2 \right),
\end{align*}
for all integers $j_1, j_2$  such that  
\begin{equation}\label{rel18}
\left(\frac{j_1}{\nu_1}\right)^2+\left(\frac{j_2}{\nu_2}\right)^2=\lambda.
\end{equation}
A \emph{sinusoidal flow}, or an \emph{eigenstate}, is an Euler flow whose stream function is some eigenfunction of $-\Delta$  on $\mathbb T^{2}$.
For a sinusoidal flow related to some eigenvalue $\lambda,$ the stream function can  be written in the following form:
 \begin{equation*} 
 \sum_{(j_{1},j_{2})\in J_{\lambda}}A_{j_{1},j_{2}}\sin\left(\frac{j_{1}}{\nu_{1}}x_{1}+\frac{j_{2}}{\nu_{2}}x_{2}+\alpha_{j_{1},j_{2}}\right)+\sum_{(j_{1},j_{2})\in J_{\lambda}}B_{j_{1},j_{2}}\sin\left(\frac{j_{1}}{\nu_{1}}x_{1}-\frac{j_{2}}{\nu_{2}}x_{2}+\beta_{j_{1},j_{2}}\right),
 \end{equation*}
where $A_{j_{1},j_{2}}, B_{j_{1},j_{2}}\geq 0, $  $\alpha_{j_{1},j_{2}}, \beta_{j_{1},j_{2}}\in\mathbb R,$  and  $J_{\lambda}$ is defined by
\begin{equation}\label{j1j2}
J_{\lambda}=\left\{(j_{1},j_{2})\in\mathbb Z^{2}\mid \left(\frac{j_1}{\nu_1}\right)^2+\left(\frac{j_2}{\nu_2}\right)^2=\lambda\right\}.
\end{equation}
 Note that for any sinusoidal flow, the total flux of velocity is $\mathbf 0$, hence the normalized velocity is equal to the velocity, and the normalized stream function is equal to the stream function.

The stability  of sinusoidal flows  is a fundamental problem in fluid dynamics and has been  extensively studied in the literature.  For the linear theory, the results are quite rich, although many open questions still remain. See \cite{BN,BFY,DW,FS,LY,MSi} and the references therein.  As to nonlinear stability, the  first rigorous  result was obtained by Arnold. In the 1960s, Arnold \cite{A1,A2} proved two nonlinear stability  criteria for plane ideal flows, now usually referred to as Arnold's first and second stability theorems. See also \cite{WGu1,WG0,WG} for some of their extensions.  As a straightforward application of Arnold's second stability theorem,  partial nonlinear stability    can be proved for sinusoidal flows related to least eigenfunctions of $-\Delta$ on $\mathbb T^2.$

To state Arnold's result, we briefly analyze the least eigenvalue $\lambda_1$ of $-\Delta$ on $\mathbb T^2.$
 By \eqref{rel1}, there are three cases:
\begin{itemize}
\item[(i)] If $\mathbb T^{2}$ is a short torus, i.e., $\nu_1<\nu_2,$ then $\lambda_1=\nu_2^{-2}$. In this case, $J_{\lambda_{1}}=\left\{(0,1), (0,-1)\right\}$,  thus any least eigenfunction $\psi_1$ takes the form
\begin{equation}\label{x1}
\psi_1(x_1,x_2)=A\sin\left(  \frac{x_2}{\nu_2} +\alpha\right),
\end{equation}
where $A\geq 0$ is called the amplitude,  and $\alpha\in\mathbb R$ is called the phase parameter.
\item[(ii)] If $\mathbb T^{2}$ is a long torus, i.e., $\nu_1>\nu_2,$ then $\lambda_1=\nu_1^{-2}$. In this case, $J_{\lambda_{1}}=\left\{(1,0), (-1,0)\right\}$,   hence any least eigenfunction  $\psi_{1}$  takes  the form
\begin{equation}\label{x2}
\psi_1(x_1,x_2)=A\sin\left(  \frac{x_1}{\nu_2} +\alpha\right) 
\end{equation}
for some $A\geq 0$ and $\alpha\in\mathbb R$.
\item[(iii)] If $\mathbb T^{2}$ is a square torus, i.e., $\nu_1=\nu_2=\nu,$ then $\lambda_1=\nu^{-2}$. In this case, $J_{\lambda_{1}}=\left\{(0,1), (0,-1),(1,0),(-1,0)\right\}$,  hence any least eigenfunction  $\psi_{1}$  takes  the form
\begin{equation}\label{x3}
\psi_1(x_1,x_2)=A\sin\left(  \frac{x_1}{\nu } +\alpha\right)+B\sin\left(  \frac{x_2}{\nu } +\beta\right)
\end{equation}
for some $A, B\geq 0$ and $\alpha, \beta\in\mathbb R$.
\end{itemize}
For convenience of subsequent presentation,   we call a sinusoidal flow of the form \eqref{x1} an $x_2$-mode, and a sinusoidal flow of the form \eqref{x2} an $x_1$-mode.
It is clear that all  $x_1$-modes, as well as all $x_2$-modes,   form a two-dimensional vector space, and all sinusoidal flows of the form \eqref{x3} form a four-dimensional vector space.

 Since it is more convenient to express Arnold's  result in terms of vorticity,
we denote by $\mathcal V_i$ the set of  vorticity functions of all $x_i$-modes,  i.e.,
\begin{equation}\label{dev01}
\mathcal V_i=\left\{A\sin\left(\frac{x_i}{\nu_i}+\alpha\right) \mid A\geq 0,\,\alpha\in\mathbb R \right\},\quad i=1,2.
\end{equation}
If $\nu_1=\nu_2=\nu$, we denote by $\mathcal V$ the set of   vorticity functions of all sinusoidal flows of the form  \eqref{x3}, i.e.,
\begin{equation}\label{dev02}
\mathcal V =\left\{A\sin\left(\frac{x_1}{\nu}+\alpha\right)+B\sin\left(\frac{x_2}{\nu}+\beta\right)\mid A,B\geq0, \,\alpha,\beta\in\mathbb R \right\}.
\end{equation}

Arnold's result can be stated as follows.
\begin{theorem}[Arnold, \cite{A1,A2}]\label{thm0}
Let $\mathcal V_1,\mathcal V_2,\mathcal V$ be defined by \eqref{dev01}, \eqref{dev02}.  Then the following assertions hold:
\begin{itemize}
\item [(i)] If  $\nu _1<\nu_2$, then $\mathcal V_2$ is nonlinearly stable in $L^2$ norm, i.e.,  for any $\varepsilon>0,$ there exists some $\delta>0$, such that for any smooth   Euler flow on $\mathbb T^2$ with vorticity $\omega$, we have that
\begin{equation}\label{sfg6}
\min_{v\in\mathcal V_2}\|\omega(0,\cdot)-v\|_{L^2 (\mathbb T^2)}<\delta\Longrightarrow \min_{v\in\mathcal V_2}\|\omega(t,\cdot)-v\|_{L^2(\mathbb T^2)}<\varepsilon \quad\forall\,t>0.
\end{equation}
 \item[(ii)] If  $\nu_1>\nu_2$, then $\mathcal V_1$ is nonlinearly  stable in $L^2$ norm, i.e., \eqref{sfg6} holds with $\mathcal V_2$   replaced by $\mathcal V_1$.
   \item[(iii)] If  $\nu_1=\nu_2=\nu$,  then $\mathcal V$ is nonlinearly  stable in $L^2$ norm, i.e., \eqref{sfg6} holds with $\mathcal V_2$  replaced by $\mathcal V$.
  \end{itemize}
 \end{theorem}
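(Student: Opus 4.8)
The plan is to run Arnold's energy--Casimir scheme using the quadratic Casimir $Z$ together with the energy $E$. The starting observation is that in each of the three cases the set whose stability is asserted is exactly the full least eigenspace of $-\Delta$ restricted to zero-mean functions: writing $A\sin(\cdot+\alpha)=A\cos\alpha\,\sin(\cdot)+A\sin\alpha\,\cos(\cdot)$ and letting $A\ge 0$, $\alpha\in\mathbb R$ vary, one sees that $\mathcal V_1$, $\mathcal V_2$ and $\mathcal V$ each coincide with the linear span $\mathcal E_1$ of the eigenfunctions associated with $\lambda_1$. Since $\mathcal E_1$ is a finite-dimensional (hence closed) subspace of $L^2(\mathbb T^2)$ and every vorticity has zero mean over $\mathbb T^2$, the quantity appearing in \eqref{sfg6} is simply $\min_{v}\|\omega-v\|_{L^2}=\|P_\perp\omega\|_{L^2}$, where $P_\perp$ is the $L^2$-orthogonal projection onto the complement of $\mathcal E_1$ within the zero-mean space. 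This reduction turns the stability statement into a coercivity estimate for a single conserved functional.

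Next I would introduce the functional $\mathcal F(\omega):=\lambda_1^{-1}Z(\omega)-E(\omega)$, which by \eqref{deoe6} and the conservation of enstrophy is constant along any smooth Euler flow. Expanding $\omega=\sum_k c_k e_k$ in an $L^2$-orthonormal basis of eigenfunctions $-\Delta e_k=\lambda_k e_k$ and using $E(\omega)=\tfrac12\sum_k\lambda_k^{-1}c_k^2$, $Z(\omega)=\tfrac12\sum_k c_k^2$, I would record the identity $\mathcal F(\omega)=\tfrac12\sum_k(\lambda_1^{-1}-\lambda_k^{-1})c_k^2$. Because $\lambda_k\ge\lambda_1$ for every $k$, each summand is nonnegative and the terms with $\lambda_k=\lambda_1$ drop out; hence $\mathcal F(\omega)\ge 0$, with equality iff $\omega\in\mathcal E_1$. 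Letting $\mu$ denote the smallest eigenvalue of $-\Delta$ strictly exceeding $\lambda_1$ (a positive spectral gap exists since the spectrum is discrete), the same identity yields the two-sided bound
\[
\tfrac12\bigl(\lambda_1^{-1}-\mu^{-1}\bigr)\,\|P_\perp\omega\|_{L^2}^2\le \mathcal F(\omega)\le \tfrac12\lambda_1^{-1}\,\|P_\perp\omega\|_{L^2}^2,
\]
since the nonzero summands are exactly those with $\lambda_k\ge\mu$.

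Finally I would close the argument by propagating this bound in time. Given $\varepsilon>0$, suppose $\min_v\|\omega(0,\cdot)-v\|_{L^2}=\|P_\perp\omega(0,\cdot)\|_{L^2}<\delta$. The upper bound gives $\mathcal F(\omega(0,\cdot))<\tfrac12\lambda_1^{-1}\delta^2$; conservation gives $\mathcal F(\omega(t,\cdot))=\mathcal F(\omega(0,\cdot))$ for all $t$; and the lower bound then yields
\[
\min_{v\in\mathcal E_1}\|\omega(t,\cdot)-v\|_{L^2}^2=\|P_\perp\omega(t,\cdot)\|_{L^2}^2\le\frac{2\,\mathcal F(\omega(t,\cdot))}{\lambda_1^{-1}-\mu^{-1}}<\frac{\lambda_1^{-1}}{\lambda_1^{-1}-\mu^{-1}}\,\delta^2 .
\]
Choosing $\delta=\varepsilon\sqrt{(\lambda_1^{-1}-\mu^{-1})/\lambda_1^{-1}}$ makes the right-hand side at most $\varepsilon^2$, which is precisely \eqref{sfg6}. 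The only genuinely substantive point, and the one I would treat most carefully, is the coercivity step: verifying that $\mathcal V_1,\mathcal V_2,\mathcal V$ fill out the entire least eigenspace (so that the relevant distance really is the orthogonal residual) and that the gap $\lambda_1^{-1}-\mu^{-1}$ is strictly positive. Everything else is bookkeeping built on the conservation laws \eqref{deoe6} and \eqref{iso0}. Note that this argument controls only the distance to the whole eigenspace $\mathcal E_1$ and allows the amplitude to drift within it; pinning the amplitude, and upgrading the $L^2$ norm to $L^p$, is exactly where the finer isovortical analysis of the paper's main theorem is required.
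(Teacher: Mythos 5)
Your proof is correct and is essentially the paper's own argument (Appendix A): the conserved functional $\mathcal F=\lambda_1^{-1}Z-E$ is exactly the paper's $\nu_2^2 Z(\omega_t)-E(\omega_t)$, your orthogonal projection $P_\perp$ corresponds to the paper's decomposition $\omega_t=\bar\omega_t+\tilde\omega_t$ with $\tilde\omega_t\in\mathcal V_2^\perp$ (Lemma \ref{fam}), and your spectral-gap constant $\mu^{-1}$ is precisely the paper's $\max\{\nu_1^2,\nu_2^2/4\}$ from Lemma \ref{lem666}. The only cosmetic difference is that you phrase the coercivity abstractly via the spectral gap, which lets you treat the three cases uniformly, whereas the paper computes the gap explicitly in Fourier coordinates and writes out case (i) only.
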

\begin{remark}
By rotational symmetry,  items   (i)  and  (ii) in Theorem \ref{thm0}  in fact tell the same thing.
\end{remark}
\begin{remark}
If $\nu _1<\nu_2$ (or $\nu _1>\nu_2$), then any $x_1$-mode ($x_2$-mode, accordingly) is known to be linearly unstable in a certain sense. See \cite{BFY,FS,MSi} for example.
\end{remark}

 Theorem \ref{thm0} can also be found in \cite{AK}, p. 98,  or  \cite{MPu}, p. 111. For the reader's convenience,  we provide a detailed proof following Arnold's original idea in Appendix \ref{appe1}.

By Theorem \ref{thm0},  any least eigenstate ($x_{2}$-mode  on  a short torus, or  $x_{1}$-mode on a long torus, or sinusoidal  flow  of the form \eqref{x3} on a square torus),
 is nonlinearly stable  \emph{up to phase translations and amplitude scalings}. 
For example, given an $x_{2}$-mode with vorticity $A_0\sin(\nu_2^{-1}x_2+\alpha_0)$ on a  short torus, if  a smooth Euler flow    is  ``close"  to this $x_2$-mode at initial time,  then at any $t>0$ the evolved flow   is ``close" to some $x_2$-mode with vorticity $A_t\sin(\nu_2^{-1}x_2+\alpha_t)$.   Here  ``closeness"
is measured in terms of  $L^2$ norm of the vorticity. Since $A_t$ and $\alpha_t$ may vary with time, it is not clear whether the $x_{2}$-mode with vorticity $A_0\sin(\nu_2^{-1}x_2+\alpha_0)$  is nonlinearly stable.

\subsection{Main result}

The nonlinear stability of a  single sinusoidal  flow was listed as an open problem on 
 p. 112 of  Marchioro-Pulvirenti's book \cite{MPu}.   Arnold's method can not handle this problem since it  only involves   energy and enstrophy conservations (see Appendix \ref{appe1}), which is not enough to distinguish different sinusoidal flows.
In \cite{WS},  Wirosoetisno-Shepherd employed high-order (cubic, quartic and quintic) Casimirs  to bound the variation of the amplitudes $A,B$ in the case of a square torus. As a consequence, they obtained the  nonlinear stability of a single sinusoidal flow up to  phase translations. However, since the bound therein depends on high-order Casimirs of the initial state,  rigorous nonlinear stability (even up to  phase translations) remains unclear.

Our purpose in this paper is to give an extension of Theorem \ref{thm0} and implement the idea in \cite{WS} rigorously. To state our result, for fixed $A\geq  0$, define
\begin{equation}\label{nce1}
\mathcal V_{A,i}=\left\{A\sin\left(\frac{x_i}{\nu_i}+\alpha\right) \mid \alpha\in\mathbb R \right\}.
\end{equation}
If $\nu_1=\nu_2=\nu,$ for fixed $A,B\geq 0, $ define
\begin{equation}\label{nce2}
\mathcal V_{A,B}=\left\{A\sin\left(\frac{x_1}{\nu}+\alpha\right)+B\sin\left(\frac{x_2}{\nu}+\beta\right) \mid \alpha,\beta\in\mathbb R \right\}.
\end{equation}
It is easy to see that
\[\mathcal V_i=\underset{ A>0}\cup\mathcal V_{A,i},\,\, i=1,2,\quad \mathcal V=\underset{A,B\geq0}\cup \mathcal V_{A,B}.\]

Our main result in this paper is the following theorem.
\begin{theorem}\label{thm1}
Let   $1<p<+\infty$ be fixed.   Then the following assertions hold:
\begin{itemize}
\item[(i)] If  $\nu_1< \nu_2$, then for any $A\geq 0$,    $\mathcal V_{A,2}$ is nonlinearly stable in  $L^p$ norm, i.e.,  for any $\varepsilon>0,$ there exists some $\delta>0$, such that for any  smooth Euler flow on $\mathbb T^{2} $ with vorticity $\omega$, we have that
\begin{equation}\label{sfg1}
\min_{v\in\mathcal V_{A,2}}\|\omega(0,\cdot)-v\|_{L^p (\mathbb T^2)}<\delta
\Longrightarrow \min_{v\in\mathcal V_{A,2}}\|\omega(t,\cdot)-v\|_{L^p(\mathbb T^2)}<\varepsilon\quad\forall\,t>0.
\end{equation}
\item[(ii)] If  $\nu_1> \nu_2$, then   for any $A\geq 0$,   $\mathcal V_{A,1}$ is nonlinearly stable in $L^p$ norm, i.e., \eqref{sfg1} holds with $\mathcal V_{A,2}$   replaced by $\mathcal V_{A,1}$.
\item[(iii)] If $\nu_1=\nu_2=\nu$, then  for any $A,B\geq 0$,   $\mathcal V_{A,B}$ is nonlinearly stable in  $L^p$ norm, i.e., \eqref{sfg1} holds with $\mathcal V_{A,2}$   replaced by $\mathcal V_{A,B}$.
\end{itemize}
\end{theorem}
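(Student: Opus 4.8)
The plan is to argue by contradiction with a compactness method built on the two conserved structures available: the kinetic energy $E$ and the isovortical property, i.e. the conservation of the full distribution function, equivalently of every Casimir $\int_{\mathbb T^2}\Phi(\omega)\,d\mathbf x$. Suppose stability fails for some $\varepsilon_0>0$; then there are smooth solutions $\omega^n$ and times $t_n$ with $\min_{v\in\mathcal V_{A,B}}\|\omega^n(0,\cdot)-v\|_{L^p}\to0$ while $\min_{v\in\mathcal V_{A,B}}\|\omega^n(t_n,\cdot)-v\|_{L^p}\ge\varepsilon_0$. Choosing near-minimizers at $t=0$ and using compactness of the phase/amplitude parameters, I may assume $\omega^n(0,\cdot)\to\omega_0^*$ in $L^p$ for some fixed $\omega_0^*\in\mathcal V_{A,B}$. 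Writing $\tilde\omega^n=\omega^n(t_n,\cdot)$, conservation gives $E(\tilde\omega^n)=E(\omega^n(0,\cdot))$, $\int\Phi(\tilde\omega^n)=\int\Phi(\omega^n(0,\cdot))$ for all $\Phi$, and $\|\tilde\omega^n\|_{L^q}=\|\omega^n(0,\cdot)\|_{L^q}$ for all $q$. Since $\{\tilde\omega^n\}$ is bounded in the reflexive space $L^p$ ($1<p<\infty$), I extract a weak limit $\tilde\omega^n\rightharpoonup\tilde\omega$. The goal is to show $\tilde\omega\in\mathcal V_{A,B}$ with $\tilde\omega^n\to\tilde\omega$ strongly in $L^p$, contradicting the $\varepsilon_0$ lower bound.

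The heart of the matter, and the realization of the idea announced in the abstract, is a variational identification of the weak limit. Let $V_1$ denote the least eigenspace (so $V_1=\mathcal V_2$, $\mathcal V_1$ or $\mathcal V$ in the three cases). Because $K=(-\Delta)^{-1}$ is compact and, for $p>1$, maps $L^p$ into $W^{2,p}\hookrightarrow L^\infty$, the energy $E(\omega)=\tfrac12\int_{\mathbb T^2}\omega K\omega\,d\mathbf x$ is both continuous and \emph{weakly} continuous on $L^p$; hence $E(\tilde\omega)=\lim_n E(\tilde\omega^n)=E(\omega_0^*)=\frac{1}{2\lambda_1}\|\omega_0^*\|_{L^2}^2$, the maximal energy compatible with the enstrophy of $\omega_0^*$. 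On the other hand, weak lower semicontinuity of convex integrands together with $L^p$-continuity of Casimirs of at most linear growth yields the majorization $\tilde\omega\prec\omega_0^*$, that is $\int\Phi(\tilde\omega)\le\int\Phi(\omega_0^*)$ for every convex $\Phi$. Combining the spectral bound $E\le\frac{1}{2\lambda_1}\|\cdot\|_{L^2}^2$ with the majorization, which forces $\|\tilde\omega\|_{L^2}\le\|\omega_0^*\|_{L^2}$, pinches all inequalities to equalities: thus $\tilde\omega\in V_1$ and $\int\tilde\omega^2=\int(\omega_0^*)^2$. By the rigidity of majorization under a strictly convex constraint, equality of the strictly convex second moments forces $\tilde\omega$ to be equimeasurable with $\omega_0^*$, so $\tilde\omega\in V_1\cap\mathcal R(\omega_0^*)$.

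It remains to compute $V_1\cap\mathcal R(\omega_0^*)$, and here is exactly where fixed amplitudes are separated by the isovortical constraint. A short computation with the distribution function (equivalently, with the second and fourth moments, which together determine $\{A^2,B^2\}$) shows that an eigenspace element $a\sin(\nu^{-1}x_1+\alpha)+b\sin(\nu^{-1}x_2+\beta)$ is equimeasurable with $\omega_0^*$ if and only if $\{a,b\}=\{A,B\}$; hence $V_1\cap\mathcal R(\omega_0^*)=\mathcal V_{A,B}\cup\mathcal V_{B,A}$ on the square torus, while on a short or long torus the amplitude is already fixed by the $L^2$ norm and $V_1\cap\mathcal R(\omega_0^*)=\mathcal V_{A,2}$ (resp.\ $\mathcal V_{A,1}$). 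Since $\tilde\omega\in\mathcal R(\omega_0^*)$ gives $\|\tilde\omega\|_{L^p}=\|\omega_0^*\|_{L^p}=\lim_n\|\tilde\omega^n\|_{L^p}$, weak convergence plus convergence of norms in the uniformly convex space $L^p$ upgrades to $\tilde\omega^n\to\tilde\omega$ strongly in $L^p$. In cases (i)--(ii) this already contradicts $\min_v\|\tilde\omega^n-v\|_{L^p}\ge\varepsilon_0$. In case (iii) the limit could a priori lie in $\mathcal V_{B,A}$; but the argument shows $\omega(t,\cdot)$ stays in any prescribed $L^p$-neighborhood of $\mathcal V_{A,B}\cup\mathcal V_{B,A}$, and when $A\ne B$ these two closed sets are a positive distance apart, so continuity of $t\mapsto\omega(t,\cdot)$ in $L^p$ and a connectedness argument confine the flow to the component $\mathcal V_{A,B}$ it starts near; when $A=B$ the two sets coincide.

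The step I expect to be the main obstacle is making the compactness argument work for the full range, in particular for $1<p<2$: there an $L^p$-small initial perturbation need not be $L^2$-small, so the enstrophy and the energy defect of $\omega^n(0,\cdot)$ need not be small and the classical energy--enstrophy estimate underlying Theorem \ref{thm0} is unavailable. The resolution rests on the two features above that are genuinely special to $p>1$: the weak continuity of $E$, which hinges on $W^{2,p}\hookrightarrow L^\infty$ and the compactness of $K$, and the recovery of the bound $\|\tilde\omega\|_{L^2}\le\|\omega_0^*\|_{L^2}$ purely from majorization tested against bounded convex functions (a truncation $\Phi_R\uparrow s^2$), which bypasses any quantitative control of the enstrophy of the data. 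The remaining technical cores are the required rearrangement facts (the weak-$L^p$ closure of $\mathcal R(\omega_0^*)$ and the equimeasurability rigidity) and the structural identity $V_1\cap\mathcal R(\omega_0^*)=\mathcal V_{A,B}\cup\mathcal V_{B,A}$.
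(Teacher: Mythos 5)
Your proposal is correct in substance, but it identifies the limit by a genuinely different mechanism than the paper, and the difference is worth spelling out. The paper never takes a weak limit of $\omega^n(t_n,\cdot)$ directly: since $\omega^n(t_n,\cdot)$ need not lie in the fixed rearrangement class $\mathcal R_{A,2}$, the paper introduces ``followers'' $\zeta^n$ (solutions of the linear transport equation with the perturbed velocity $\mathbf v^n$ but initial datum $\bar\omega\in\mathcal V_{A,2}$, as in \cite{B5,BR}), so that $\zeta^n_{t_n}\in\mathcal R_{A,2}$ exactly while $\|\zeta^n_{t_n}-\omega^n_{t_n}\|_{L^p}=\|\bar\omega-\omega^n_0\|_{L^p}\to0$; it then applies a compactness result (Proposition \ref{compact}) for maximizing sequences inside $\mathcal R_{A,2}$, whose proof rests on Burton's theorems on rearrangement classes (Lemmas \ref{lem201}--\ref{lem202}), the positive definiteness of $K$ (Lemma \ref{sppp}), and the variational characterization (Proposition \ref{varcha}). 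You instead pass to the weak limit of $\omega^n(t_n,\cdot)$ itself and compensate for the loss of exact rearrangement membership by using \emph{all} convex Casimirs: weak lower semicontinuity plus Casimir conservation gives the majorization $\tilde\omega\prec\omega_0^*$, the pinched spectral inequality (the paper's Lemma \ref{lem666}) puts $\tilde\omega$ in the least eigenspace with $\|\tilde\omega\|_{L^2}=\|\omega_0^*\|_{L^2}$, and an equimeasurability rigidity theorem (equality of a strictly convex Casimir under majorization) recovers $\tilde\omega\in\mathcal R(\omega_0^*)$. That rigidity statement is the one ingredient you invoke without proof; it is true (a Ryff--Douglas--Chong type result), and in fact follows from tools already in the paper's orbit: the weak closure of $\mathcal R(\omega_0^*)$ in $L^2$ is precisely the set of functions majorized by $\omega_0^*$ (cf.\ \cite{B1}), on which equality of $L^2$ norms upgrades weak to strong convergence by uniform convexity, and strong limits of equimeasurable functions are equimeasurable. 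The trade-off: the paper's follower construction uses only energy and $L^p$-norm conservation but requires the auxiliary transport solution to exist (this is exactly the hypothesis flagged in its remark on low-regularity perturbations), whereas your route needs no follower and only conservation laws, at the price of the extra rearrangement-theoretic rigidity input. Your endgame for case (iii) --- the identity $\mathcal V\cap\mathcal R(\omega_0^*)=\mathcal V_{A,B}\cup\mathcal V_{B,A}$ via the $L^\infty$ and $L^2$ norms, the positive separation of the two components when $A\neq B$ (Lemma \ref{pddd9}), and the continuity-plus-connectedness argument --- coincides with the paper's. One small caution: in case (iii) the $L^2$ equality alone does not pin down $\{C,D\}=\{A,B\}$ (e.g.\ $C=\sqrt{2},D=0$ versus $A=B=1$), so the full equimeasurability, not merely the pinched enstrophy, is genuinely needed there; you noted this correctly, and your proof does supply it.
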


 \begin{remark}
In Theorem \ref{thm1}, to avoid some technical (but not essential) difficulties and illustrate the main idea clearly, we assume that the perturbed flows are smooth.  However, by checking the proof carefully,  Theorem \ref{thm1} actually holds for a large class of less regular perturbations such that (i) the quantities (C1)-(C3) are conserved; (ii) the vorticity is continuous in $L^p(\mathbb T^2)$ with respect to the time variable; (iii) a ``follower" to the perturbed vorticity as in Section 5 exists.
 \end{remark}
 
 \begin{remark}\label{wen1}
By Lemma \ref{den1} in Section 2, the nonlinear stabilities in Theorem \ref{thm1} can also be measured in terms of $W^{1,p}$ norm of the normalized velocity, or $W^{2,p}$ norm of the normalized stream function for any $1<p<+\infty$. For example, Theorem \ref{thm1}(i) can be equivalently stated as follows.
 \begin{itemize}
 \item[(1)] If  $\nu_1< \nu_2$, then for any $A\geq 0$, it holds that:  for any $\varepsilon>0,$ there exists some $\delta>0$, such that for any  smooth Euler flow on $\mathbb T^{2} $ with normalized velocity  $\tilde{\mathbf v}$, we have that
\begin{equation*}
\min_{\mathbf u\in\mathsf V_{A,2}}\|\tilde{\mathbf v}(0,\cdot)-\mathbf u\|_{W^{1,p} (\mathbb T^2)}<\delta
\Longrightarrow \min_{\mathbf u\in\mathsf V_{A,2}}\|\tilde{\mathbf v}(t,\cdot) -\mathbf u\|_{W^{2,p}(\mathbb T^2)}<\varepsilon\quad\forall\,t>0,
\end{equation*}
where   $\mathsf V_{A,2}$ is the set of  velocities (also the set of normalized velocities) related to $\mathcal V_{A,2},$ given by
\[\mathsf V_{A,2}=\left\{\left(A\nu_{2}\cos\left(\nu^{-1}_{2} {x_2} +\alpha\right),0\right)\mid \alpha\in\mathbb R \right\}.\]
 \item[(2)] If  $\nu_1< \nu_2$, then for any $A\geq 0$,  it holds that:  for any $\varepsilon>0,$ there exists some $\delta>0$, such that for any  smooth Euler flow on $\mathbb T^{2} $ with normalized stream function $\tilde \psi$, we have that
\begin{equation*}
\min_{\varphi\in\mathsf S_{A,2}}\|\tilde\psi(0,\cdot)-\varphi\|_{W^{2,p} (\mathbb T^2)}<\delta
\Longrightarrow \min_{\varphi\in\mathsf S_{A,2}}\|\tilde\psi(t,\cdot)-\varphi\|_{W^{2,p} (\mathbb T^2)}<\varepsilon\quad\forall\,t>0,
\end{equation*}
where   $\mathsf S_{A,2}$ is the set of  stream functions  (also the set of normalized stream functions) related to $\mathcal V_{A,2},$ given by
\[\mathsf S_{A,2}=\left\{A\nu_{2}^{2}\sin\left(\nu_{2}^{-1}{x_2} +\alpha\right) \mid \alpha\in\mathbb R \right\}.\]

 \end{itemize}
 
 \end{remark}

By Theorem \ref{thm1},  any eigenstate on $\mathbb T^{2}$ is nonlinearly stable \emph{only up to phase translations}, which  noticeably improves Theorem \ref{thm0}. Moreover, the stabilities  in Theorem \ref{thm1} are measured in terms of $L^p$ norm of the vorticity for any $1<p<+\infty,$ which are also more general than those in Theorem \ref{thm0}.

To prove Theorem \ref{thm1}, we  use a variational approach in combination with a compactness argument, which is very different from the classical energy-Casimir method used by
 Arnold in \cite{A1,A2} and  Wirosoetisno-Shepherd in \cite{WS}.
Our method  consists of three  ingredients:  a suitable variational characterization for the sinusoidal flows under consideration, a compactness argument, and proper use of flow invariants.
These three ingredients are also essential in the nonlinear stability analysis of many other stationary Euler flows. See \cite{Abe,BG,Bjde,Bcmp,CWCV,CWN,CD,WGu2} for example.
The   variational characterizations, which states that the sinusoidal flows in  Theorem \ref{thm1} are exactly the set of  maximizers of the conserved functional $E$ relative to all isovortical flows to them, are the most important step in the whole proof.  The advantage of such variational characterizations is that we are able to distinguish the sinusoidal flows  with vorticity in $\mathcal V_{A,1},\mathcal V_{A,2} $ or $\mathcal V$  from other least eigenstates.  

The nonlinear stability of a single sinusoidal flow still remains open. In our method, we only use energy and vorticity conservations,
which are not enough to differentiate one flow from another within the set of flows with vorticity in $\mathcal V_{A,1}, \mathcal V_{A,2}$ or $\mathcal V_{A,B}$. Hence to improve  Theorem \ref{thm1} further, new flow invariants are needed.   This is an interesting further work.

This paper is organized as follows. In Section 2, we give some preliminary materials for later use. In  Section 3, we establish variational principles for the sinusoidal flows  under consideration. In Section 4, we prove   compactness  related to the variational principles established in Section 3. In Section 5, we give the proof of Theorem \ref{thm1}.

 \section{Preliminaries}

\subsection{Definitions, notation  and  basic facts}

 \begin{itemize}
  \item For $1\leq p\leq +\infty$, denote by $L^{p}(\mathbb T^2)$ the set of all  $p$-th power integrable (essentially bounded if $p=+\infty$) real-valued   functions on $\mathbb T^2$.  The norm of $L^{p}(\mathbb T^2)$ is denoted by $\|\cdot\|_{L^{p}(\mathbb T^{2})}.$
  \item For $1\leq p\leq +\infty$ and $k\in \mathbb Z^{+}$, where $\mathbb Z^+$ is the set of positive integers, $W^{k,p}(\mathbb T^{2})$ denotes the set of all real-valued functions whose  weak derivatives up to order $k$ are $p$-th power integrable (essentially bounded if $p=+\infty$). The norm of $W^{k,p}(\mathbb T^2)$ is denoted by $\|\cdot\|_{W^{k,p}(\mathbb T^{2})}.$ Note that if we consider $D=(0, 2\pi \nu_{1} )\times (0, 2\pi \nu_{2}),$  a domain of $\mathbb R^{2},$ then $W^{k,p}(\mathbb T^2)$ can be regarded as a closed subspace of $W^{k,p}(D)$, and $f\in W^{k,p}(\mathbb T^2)$ if and only if $f\in W^{k,p}(D)$ and satisfies
  \[  f(0,x_{2})=f(2\pi\nu_{1},x_{2}),\quad \forall\,0\leq x_{2}\leq  2\pi \nu_{2},\]
  \[f( x_{1},0)=f( x_{1},2\pi\nu_{2}),\quad\forall\,0\leq x_{1}\leq 2\pi\nu_{1}\]
     in the sense of traces.
   \item For $1\leq p\leq +\infty$ and $k\in \mathbb Z^{+}$, denote
   \[\mathring {L}^p(\mathbb T^2)=\left\{f\in L^{p}(\mathbb T^{2})\mid \int_{\mathbb T^{2}}fd\mathbf x=0\right\},\]
   \[ \mathring {W}^{k,p}(\mathbb T^2)=\left\{f\in W^{k,p}(\mathbb T^{2})\mid \int_{\mathbb T^{2}}fd\mathbf x=0\right\}.\]
 It is clear that $\mathring {L}^p(\mathbb T^2)$  is a closed subspace of ${L}^p(\mathbb T^2)$, and  $\mathring {W}^{k,p}(\mathbb T^2)$  is a closed subspace of   ${W}^{k,p}(\mathbb T^2)$.

    \item Denote by $L^2(\mathbb T^2;\mathbb C)$  the set of all  $p$-th power integrable   complex-valued  functions on $\mathbb T^2$ endowed with the following inner product:
  \[<f,g>=\int_{\mathbb T^2}f(\mathbf x)\overline{g(\mathbf x)}d\mathbf x,  \quad\forall\,f,g\in L^2(\mathbb T^2;\mathbb C),\]
  where $\overline{g(\mathbf x)}$ is the complex conjugate of $g(\mathbf x).$

  \item   Denote by $\mathbb Z^2$  the set of all points in $\mathbb R^2$ with integer coordinates.  For $\mathbf k=(k_1,k_2)\in\mathbb Z^2$, define
 \[\zeta_{\mathbf k}(\mathbf x)=\frac{1}{  \sqrt{4\pi^2\nu_1\nu_2}}e^{ i\left( \frac{k_1}{\nu_1}  x_1+\frac{k_2}{\nu_2}  x_2\right)},\quad\mathbf x=(x_1,x_2)\in \mathbb T^2.\]
 Then $\{\zeta_{\mathbf k}\}_{\mathbf k\in\mathbb Z^2}$ is an orthonormal basis of $L^2(\mathbb T^2;\mathbb C)$ (see \cite{Gra}, p. 186). For $i,j=1,2,$ it is easy to check that
  \begin{equation}\label{qiud}
  \partial_{x_{i}}\zeta_{\mathbf k}=  i  \left(\frac{k_i}{\nu_i}\right) \zeta_{\mathbf k},\quad \partial_{x_{i}x_{j}}\zeta_{\mathbf k}=-\left(\frac{k_i}{\nu_i}\right)\left(\frac{k_j}{\nu_j}\right)\zeta_{\mathbf k}.
  \end{equation}
  In particular,
    \begin{equation}\label{qiud8}
-\Delta\zeta_{\mathbf k}=\left[\left(\frac{k_1}{\nu_1}\right)^2+\left(\frac{k_2}{\nu_2}\right)^2\right]\zeta_{\mathbf k}.
  \end{equation}
 \item  For  $f\in L^1(\mathbb T^2;\mathbb C)$ and $\mathbf k\in\mathbb Z^2$, denote by $\hat f_{\mathbf k}$ the $\mathbf k$-th Fourier coefficient of $f$, i.e.,
  \[\hat f_{\mathbf k}=<f,\zeta_{\mathbf k}>=\int_{\mathbb T^2}f(\mathbf x)\overline{\zeta_{\mathbf k}(\mathbf x)}d\mathbf x.\]
 The Fourier series of $f$ is  then
 \[f\thicksim\sum_{\mathbf k\in\mathbb Z^2}\hat f_{\mathbf k}\zeta_{\mathbf k}.\]
 \item For any $f\in L^1(\mathbb T^2;\mathbb C)$ and  $N\in\mathbb Z^{+}$, denote by $f_N$ the $N$-th \emph{circular} partial sum of the Fourier series of $f$, i.e.,
 \begin{equation}\label{fn}
 f_N =\sum_{\mathbf k\in \mathbb Z^2,   |\mathbf k|_\infty\leq N}\hat f_{\mathbf k}\zeta_{\mathbf k},\quad\mbox{where }\,\, |\mathbf k|_\infty:=\max\{|k_1|,|k_2|\}.
 \end{equation}
Note that if $f$ is real-valued, then $\hat f_{-\mathbf k}=\overline{f_{\mathbf k}}$ for any $\mathbf k\in\mathbb Z^2$, thus $f_N$ is also real-valued for any  $N\in\mathbb Z^{+}$. Also note that for fixed $1<p<+\infty$, if  $f\in L^p(\mathbb T^2;\mathbb C),$ then  $f_N\to f$ in $L^p(\mathbb T^2)$ as $N\to+\infty$ (see \cite{Gra}, Theorem 4.1.8).
 \end{itemize}

\subsection{Poisson equation on a flat torus}\label{app0}
In this subsection, we study the following Poisson equation:
   \begin{equation}\label{ppo}
   \begin{cases}
  -\Delta u=f, &\mathbf x\in\mathbb T^2,\\
  u\in \mathring{W}^{2,p} (\mathbb T^2),
  \end{cases}
 \end{equation}
 where  $f\in \mathring{L}^p(\mathbb T^2)$, $1<p<+\infty$.

 \begin{lemma}\label{bsl99}
Let  $1<p<+\infty$. Then for any $f\in \mathring{L}^p(\mathbb T^2)$,
 there exists a unique solution  $u $  to the Poisson equation \eqref{ppo}.
 Moreover, the following estimate holds:
 \begin{equation}\label{bddo}
 \|u\|_{W^{2,p}(\mathbb T^2)}\leq C\| f\|_{L^p(\mathbb T^2)},
 \end{equation}
where $C>0$ depends only on $\nu_1,\nu_2$ and $p.$
 \end{lemma}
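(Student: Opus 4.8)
The plan is to solve \eqref{ppo} explicitly in Fourier space and then upgrade the elementary $L^2$ theory to $L^p$ by means of a Calder\'on--Zygmund (Mikhlin--H\"ormander) multiplier estimate. I would begin with uniqueness, which is the easy part. If $u\in\mathring{W}^{2,p}(\mathbb T^2)$ solves $-\Delta u=0$, then comparing Fourier coefficients via \eqref{qiud8} forces $[(k_1/\nu_1)^2+(k_2/\nu_2)^2]\hat u_{\mathbf k}=0$ for every $\mathbf k$, so $\hat u_{\mathbf k}=0$ for $\mathbf k\neq\mathbf 0$, while $\hat u_{\mathbf 0}=0$ by the zero-mean constraint; hence $u\equiv 0$. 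The same computation pins down the candidate solution: since $f\in\mathring L^p$ gives $\hat f_{\mathbf 0}=0$, any solution must satisfy $\hat u_{\mathbf k}=\hat f_{\mathbf k}/[(k_1/\nu_1)^2+(k_2/\nu_2)^2]$ for $\mathbf k\neq\mathbf 0$ and $\hat u_{\mathbf 0}=0$.

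The heart of the proof is the a priori estimate \eqref{bddo}, and within it the bound on the second derivatives. By \eqref{qiud}, the Fourier coefficients of $\partial_{x_ix_j}u$ are obtained from those of $f$ by multiplication with the symbol
\[
m_{ij}(\mathbf k)=-\frac{(k_i/\nu_i)(k_j/\nu_j)}{(k_1/\nu_1)^2+(k_2/\nu_2)^2},\qquad \mathbf k\in\mathbb Z^2\setminus\{\mathbf 0\},
\]
with $m_{ij}(\mathbf 0):=0$. The natural extension of $m_{ij}$ to $\xi\in\mathbb R^2\setminus\{\mathbf 0\}$ is homogeneous of degree zero and smooth, hence satisfies the Mikhlin--H\"ormander bounds; multiplying by a smooth cutoff that vanishes near the origin (which leaves the values at the nonzero lattice points unchanged, since those satisfy $|\mathbf k|_\infty\geq 1$) turns it into a genuine $L^p(\mathbb R^2)$ Fourier multiplier for every $1<p<+\infty$. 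The transference principle for Fourier multipliers then shows that the corresponding periodic operator $f\mapsto\partial_{x_ix_j}u$ is bounded on $L^p(\mathbb T^2)$, giving $\|\partial_{x_ix_j}u\|_{L^p(\mathbb T^2)}\leq C\|f\|_{L^p(\mathbb T^2)}$ with $C=C(\nu_1,\nu_2,p)$. To recover the lower-order terms I would invoke the Poincar\'e inequality on $\mathbb T^2$: since $u$ and each $\partial_{x_i}u$ have zero mean (the latter by periodicity), one has $\|u\|_{L^p(\mathbb T^2)}\leq C\|\nabla u\|_{L^p(\mathbb T^2)}\leq C\|\nabla^2 u\|_{L^p(\mathbb T^2)}$, and \eqref{bddo} follows.

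For existence I would argue by approximation, exploiting that the estimate is uniform in the truncation level. Replacing $f$ by its circular partial sums $f_N$ from \eqref{fn}, the corresponding solutions
\[
u_N=\sum_{\mathbf k\in\mathbb Z^2,\,0<|\mathbf k|_\infty\leq N}\frac{\hat f_{\mathbf k}}{(k_1/\nu_1)^2+(k_2/\nu_2)^2}\,\zeta_{\mathbf k}
\]
are trigonometric polynomials, hence lie in $\mathring{W}^{2,p}(\mathbb T^2)$ and solve $-\Delta u_N=f_N$ exactly. Applying \eqref{bddo} to the differences yields $\|u_N-u_M\|_{W^{2,p}(\mathbb T^2)}\leq C\|f_N-f_M\|_{L^p(\mathbb T^2)}$; since $f_N\to f$ in $L^p(\mathbb T^2)$ for $1<p<+\infty$, the sequence $(u_N)$ is Cauchy in the closed subspace $\mathring{W}^{2,p}(\mathbb T^2)$ and converges to some $u$ there. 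Passing to the limit in $-\Delta u_N=f_N$ identifies $-\Delta u=f$, so $u$ is the desired solution.

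I expect the main obstacle to be the $L^p$ multiplier bound for the second derivatives: for $p=2$ it is immediate from Parseval, but for general $p$ it is precisely the Calder\'on--Zygmund content of the estimate and requires the multiplier/transference machinery above (equivalently, the $L^p$ boundedness of the periodic Riesz-type transforms). Everything else---uniqueness, the Poincar\'e step, and the approximation scheme for existence---is routine once this bound is in hand.
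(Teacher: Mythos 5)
Your proposal is correct, and its skeleton is the same as the paper's: explicit Fourier-space solution for the truncated data $f_N$ from \eqref{fn}, a uniform $L^p$ bound on second derivatives, the Poincar\'e inequality to control the lower-order terms, and a Cauchy-sequence argument in $\mathring{W}^{2,p}(\mathbb T^2)$ to pass to the limit. The one genuine difference is how the key Calder\'on--Zygmund estimate is sourced: the paper simply cites the classical theorem on periodic singular integrals (\cite{CZ}, Theorem 10) applied to the trigonometric polynomials $u_N$, whereas you re-derive it from scratch by extending the symbol $m_{ij}(\mathbf k)=-\frac{(k_i/\nu_i)(k_j/\nu_j)}{(k_1/\nu_1)^2+(k_2/\nu_2)^2}$ to a homogeneous degree-zero Mikhlin--H\"ormander multiplier on $\mathbb R^2$ and transferring it to $\mathbb T^2$; your cutoff near the origin is exactly the right device here, since de Leeuw-type transference needs continuity of the symbol at the lattice points and the homogeneous extension is discontinuous at $\mathbf 0$. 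This buys a self-contained proof at the cost of invoking the multiplier/transference machinery, while the paper's citation is shorter but leaves the $L^p$ content as a black box. Your uniqueness argument (vanishing of Fourier coefficients of a mean-zero harmonic function) also differs cosmetically from the paper's, which shows $\int_{\mathbb T^2}|\nabla(u_1-u_2)|^2\,d\mathbf x=0$ by integration by parts; both are routine and correct.
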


\begin{proof}
First we prove existence.  Consider the following approximate equation:
 \begin{equation}\label{ppo1}
 \begin{cases}
 -\Delta u_N=f_N,&\mathbf x\in \mathbb T^2,\\
 u_{N}\in\mathring{W}^{2,p}(\mathbb T^{2}),\\
 \end{cases}
 \end{equation}
where $f_N$ is $N$-th  circular  partial sum of the Fourier series of $f$, defined by \eqref{fn}.  Since $f\in \mathring{L}^{p}(\mathbb T^{2})$, we have  $\hat f_{\mathbf 0}=0$.
Then it is easy to check that \eqref{ppo1} admits an explicit solution:
\[u_N=\sum_{\mathbf k\in \mathbb Z^2,   0<|\mathbf k|_\infty\leq N}\frac{\hat f_{\mathbf k}}{  \left( \frac{k_1}{\nu_1}\right)^2+\left( \frac{k_2}{\nu_2}\right)^2 }\zeta_{\mathbf k}.\]
Moreover, we have the following uniform estimate for  $u_{N}$ (see \cite{CZ}, Theorem 10):
\begin{equation}\label{gla1}
\|\partial_{x_{i}x_{j}}u_N\|_{L^p(\mathbb T^2)}\leq C\|f_N\|_{L^p(\mathbb T^2)}, \quad \forall\,i,j=1,2,
\end{equation}
where $C>0$ depends only on $\nu_1,\nu_2$ and $p.$ Applying   the Poincar\'e inequality (notice that $u_N\in \mathring L^p(\mathbb T^2)$ and $\partial_iu_N\in \mathring L^p(\mathbb T^2)$, $ i=1,2$), we further  have that
\begin{equation}\label{kja1}
\|u_N\|_{W^{2,p}(\mathbb T^2)}\leq C\|f_N\|_{L^p(\mathbb T^2)}.
\end{equation}
Similarly, for any  $N_{1},N_{2}\in\mathbb Z^+,$
\begin{equation}\label{kja2}
\|u_{N_1}-u_{N_2}\|_{W^{2,p}(\mathbb T^2)}\leq C\|f_{N_1}-f_{N_2}\|_{L^p(\mathbb T^2)}.
\end{equation}
From \eqref{kja2}, taking into account the fact that $f_{N}\to f$ in $L^{p}(\mathbb T^{2})$ as $N\to+\infty$, we see that $\{u_N\}$ is a Cauchy sequence in $\mathring W^{2,p}(\mathbb T^2),$
 thus $u_N$ converges to some $u$ in $\mathring W^{2,p}(\mathbb T^2)$ as $N\to+\infty$.  It is clear that  $-\Delta u=f$ a.e. $\mathbf x\in\mathbb T^2$, hence  $u$  solves \eqref{ppo}. Moreover, passing to the limit $N\to+\infty$ in \eqref{kja1} gives
\[\|u\|_{W^{2,p}(\mathbb T^2)}\leq C\|f\|_{L^p(\mathbb T^2)}.\]

Next we prove uniqueness. Suppose \eqref{ppo} has two solutions, say $u_1 $ and $u_2$. Then
\[-\Delta (u_1-u_2)=0,\quad\mathbf x\in\mathbb T^{2}.\]
By integration by parts,
\[\int_{\mathbb T^2}|\nabla(u_1-u_2)|^2d\mathbf x=0,\]
which implies that $u_1=u_2+c$ for some constant $c$. Taking into account the fact that  $u_{1},u_{2}\in \mathring{W}^{2,p}(\mathbb T^{2})$,
we obtain $u_1\equiv u_2$.
\end{proof}

By Lemma \ref{bsl99},  the negative Laplacian on $\mathbb T^{2}$ has an inverse, denoted by $K$. The estimate \eqref{bddo} indicates that $K$ is a bounded operator from $\mathring L^{p}(\mathbb T^{2})$ to $\mathring W^{2,p}(\mathbb T^{2})$.

The following lemma, asserting that $K$ is symmetric and positive definite, is crucial to the proof of Proposition \ref{compact} in Section 4.

\begin{lemma}\label{sppp}
Let $1<p<+\infty$ be fixed. Then
\begin{itemize}
\item[(i)] for any $f,g\in\mathring{L}^{p}(\mathbb T^{2}),$ it holds that
\begin{equation}\label{symm}
\int_{\mathbb T^{2}}fKg d\mathbf x=\int_{\mathbb T^{2}}gKf d\mathbf x;
\end{equation}
\item [(ii)]for any $f\in\mathring{L}^{p}(\mathbb T^{2}), $ it holds that
\begin{equation}\label{pode}
\int_{\mathbb T^{2}}fKf d\mathbf x\geq 0,
\end{equation}
and the  equality holds  if and only if $f\equiv 0.$
\end{itemize}
\end{lemma}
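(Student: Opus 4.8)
The plan is to reduce both claims to the Green-type identity
\[
\int_{\mathbb T^2} f\,Kg\,d\mathbf x=\int_{\mathbb T^2}\nabla Kf\cdot\nabla Kg\,d\mathbf x,
\]
which is visibly symmetric in $f,g$ and, upon setting $g=f$, visibly nonnegative. The one real subtlety is that for $1<p<2$ one does \emph{not} have $\mathring L^p(\mathbb T^2)\subset L^2(\mathbb T^2)$, so neither Parseval's identity nor Green's formula may be applied directly to general $f,g\in\mathring L^p$. To bypass this I would first prove the relevant identities for trigonometric polynomials, where $K$ acts explicitly and every integration by parts is elementary, and then pass to the limit using the boundedness of $K$ from Lemma \ref{bsl99} together with the convergence $f_N\to f$ in $L^p$ recorded in Section 2.

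First I would treat the polynomial level. For $f,g\in\mathring L^p$ let $f_N,g_N$ be the circular partial sums from \eqref{fn}; these are smooth, have zero mean (since $\hat f_{\mathbf 0}=0$), and $Kf_N=\sum_{0<|\mathbf k|_\infty\le N}\mu_{\mathbf k}^{-1}\hat f_{\mathbf k}\zeta_{\mathbf k}$, with $\mu_{\mathbf k}=(k_1/\nu_1)^2+(k_2/\nu_2)^2>0$, is again a trigonometric polynomial (likewise $Kg_N$). Because $Kf_N,Kg_N$ are smooth and periodic, integrating by parts twice with no boundary term on $\mathbb T^2$ gives $\int_{\mathbb T^2}f_N\,Kg_N\,d\mathbf x=\int_{\mathbb T^2}(-\Delta Kf_N)\,Kg_N\,d\mathbf x=\int_{\mathbb T^2}\nabla Kf_N\cdot\nabla Kg_N\,d\mathbf x=\int_{\mathbb T^2}Kf_N\,(-\Delta Kg_N)\,d\mathbf x=\int_{\mathbb T^2}g_N\,Kf_N\,d\mathbf x$, which is the symmetry \eqref{symm} at the level of partial sums. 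To pass to the limit I would use that $K$ is bounded from $\mathring L^p$ into $\mathring W^{2,p}$ and that $W^{2,p}(\mathbb T^2)\hookrightarrow C(\mathbb T^2)\subset L^\infty(\mathbb T^2)$ for every $p>1$, so $Kf_N\to Kf$ and $Kg_N\to Kg$ in $L^\infty$; combined with $f_N\to f$, $g_N\to g$ in $L^p$, a two-term H\"older splitting shows $\int_{\mathbb T^2} f_N\,Kg_N\to\int_{\mathbb T^2} f\,Kg$ and $\int_{\mathbb T^2} g_N\,Kf_N\to\int_{\mathbb T^2} g\,Kf$. Letting $N\to\infty$ yields \eqref{symm}.

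For (ii) I would set $g=f$ above to obtain $\int_{\mathbb T^2}f_N\,Kf_N\,d\mathbf x=\int_{\mathbb T^2}|\nabla Kf_N|^2\,d\mathbf x\ge0$. Here I would invoke the embedding $W^{1,p}(\mathbb T^2)\hookrightarrow L^2(\mathbb T^2)$, valid for all $1<p<+\infty$ (for $p<2$ because $2p/(2-p)\ge2$, and trivially otherwise); since $Kf_N\to Kf$ in $W^{2,p}$ it follows that $\nabla Kf_N\to\nabla Kf$ in $L^2$, hence $\int_{\mathbb T^2}|\nabla Kf_N|^2\to\int_{\mathbb T^2}|\nabla Kf|^2$, while the left-hand side tends to $\int_{\mathbb T^2} f\,Kf$ as before. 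This establishes $\int_{\mathbb T^2}f\,Kf\,d\mathbf x=\int_{\mathbb T^2}|\nabla Kf|^2\,d\mathbf x\ge0$, which is \eqref{pode}. If equality holds then $\nabla Kf=0$ a.e., so $Kf$ is constant; being an element of $\mathring W^{2,p}$ it has zero mean, whence $Kf\equiv0$ and therefore $f=-\Delta Kf\equiv0$. The converse implication is immediate.

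The hard part is entirely the low-integrability regime $1<p<2$: it is what rules out a one-line Fourier or Hilbert-space argument and forces the approximation scheme. Once the identities are secured for trigonometric polynomials, the two Sobolev embeddings $W^{2,p}\hookrightarrow L^\infty$ and $W^{1,p}\hookrightarrow L^2$ render every limit routine, so I do not expect any further obstacle.
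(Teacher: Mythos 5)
Your proof is correct and follows essentially the same route as the paper's: both reduce the lemma to the Green identity $\int_{\mathbb T^2} f\,Kg\,d\mathbf x=\int_{\mathbb T^2}\nabla Kf\cdot\nabla Kg\,d\mathbf x$, which is symmetric in $f,g$ and, for $g=f$, nonnegative with equality only when $Kf$ is a zero-mean constant, i.e.\ $f=-\Delta Kf\equiv 0$. The only difference is one of rigor rather than method: the paper integrates by parts on $Kf,Kg\in\mathring{W}^{2,p}(\mathbb T^2)$ directly, leaving implicit its justification when $1<p<2$, whereas your approximation by circular partial sums, combined with the boundedness of $K$ from Lemma \ref{bsl99} and the embeddings $W^{2,p}(\mathbb T^2)\hookrightarrow L^\infty(\mathbb T^2)$ and $W^{1,p}(\mathbb T^2)\hookrightarrow L^2(\mathbb T^2)$, makes that step fully explicit.
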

\begin{proof}
First we prove (i). Denote $u=Kf$, $v=Kg$. By integration by parts,
\[\int_{\mathbb T^{2}}fKg d\mathbf x=\int_{\mathbb T^{2}}(-\Delta u)v d\mathbf x=\int_{\mathbb T^{2}}\nabla u\cdot\nabla vd\mathbf x.\]
Similarly,
\[\int_{\mathbb T^{2}}gKf d\mathbf x=\int_{\mathbb T^{2}}(-\Delta v)u d\mathbf x=\int_{\mathbb T^{2}}\nabla v\cdot\nabla ud\mathbf x.\]
Hence \eqref{symm} holds.

Next we prove (ii). Still denote $u=Kf$. Then integration by parts gives
\[\int_{\mathbb T^{2}}fKf d\mathbf x=\int_{\mathbb T^{2}}(-\Delta u)u d\mathbf x=\int_{\mathbb T^{2}}|\nabla u|^{2}d\mathbf x.\]
Hence \eqref{pode} holds. Moreover, \eqref{pode} is an equality if and only if $u\equiv 0$, which is equivalent to $f\equiv 0.$
\end{proof}

Denote by $\mathcal V_1^\perp,\mathcal V_{2}^{\perp},\mathcal V^{\perp}$   the orthogonal complements of $\mathcal V_1,\mathcal V_{2},\mathcal V$   in $\mathring L^2(\mathbb T^2),$ respectively.  The following lemma will also be needed in subsequent sections.

 \begin{lemma}\label{fam}
For $i=1,2$, it holds that
 \begin{equation}\label{fam1}
 \int_{\mathbb T^{2}}\nabla Kg\cdot\nabla Kh d\mathbf x=0,\quad\forall\,g\in\mathcal V_{i},\,h\in\mathcal V_{i}^{\perp}.
 \end{equation}
  If additionally  $\nu_1=\nu_2=\nu$, then
 \begin{equation}\label{fam2}
 \int_{\mathbb T^{2}}\nabla Kg\cdot\nabla Kh d\mathbf x=0,\quad\forall\,g\in\mathcal V,\,h\in\mathcal V^{\perp}.
 \end{equation}
 \end{lemma}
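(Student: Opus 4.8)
The plan is to exploit the fact that every function in $\mathcal V_i$, and likewise every function in $\mathcal V$ when $\nu_1=\nu_2=\nu$, is an eigenfunction of $-\Delta$ on $\mathbb T^2$, so that the inverse operator $K$ acts on such functions simply as multiplication by a constant. Once this is recorded, both identities \eqref{fam1} and \eqref{fam2} collapse onto the $L^2$-orthogonality that is already built into the definitions of $\mathcal V_i^\perp$ and $\mathcal V^\perp$.

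First I would verify the eigenfunction identity. For $g=A\sin(\nu_i^{-1}x_i+\alpha)\in\mathcal V_i$, a direct differentiation gives $-\Delta g=\nu_i^{-2}g$; since $\nu_i^2 g$ has zero mean over $\mathbb T^2$ and satisfies $-\Delta(\nu_i^2 g)=g$, the uniqueness part of Lemma \ref{bsl99} yields $Kg=\nu_i^2 g$. In the square case $\nu_1=\nu_2=\nu$, the two constituent modes of any $g\in\mathcal V$ share the single eigenvalue $\nu^{-2}$, so exactly the same reasoning gives $Kg=\nu^2 g$ for every $g\in\mathcal V$.

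Next I would integrate by parts. For $h$ in $\mathring L^2(\mathbb T^2)$ we have $Kh\in\mathring W^{2,2}(\mathbb T^2)$ by Lemma \ref{bsl99}, while $Kg$ is smooth; since $\mathbb T^2$ has no boundary, no boundary terms appear and
\[\int_{\mathbb T^2}\nabla Kg\cdot\nabla Kh\,d\mathbf x=\int_{\mathbb T^2}Kg\,(-\Delta Kh)\,d\mathbf x=\int_{\mathbb T^2}Kg\,h\,d\mathbf x,\]
where the last equality uses $-\Delta Kh=h$. Substituting $Kg=\nu_i^2 g$ (respectively $Kg=\nu^2 g$) turns the right-hand side into $\nu_i^2\int_{\mathbb T^2}gh\,d\mathbf x$ (respectively $\nu^2\int_{\mathbb T^2}gh\,d\mathbf x$). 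Because $h$ lies in the orthogonal complement of $\mathcal V_i$ (respectively $\mathcal V$) in $\mathring L^2(\mathbb T^2)$, it is in particular orthogonal to the single element $g$, so $\int_{\mathbb T^2}gh\,d\mathbf x=0$ and the integral vanishes.

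There is no serious obstacle in this argument; the only points that need a moment's care are checking that each $g$ has zero mean, so that $K$ is defined on it and reduces to scalar multiplication, and observing that orthogonality to the whole cone $\mathcal V_i$ (which is not a subspace, only a cone) already forces orthogonality to the particular element $g$, so this distinction plays no role.
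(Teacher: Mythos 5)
Your proof is correct and takes essentially the same route as the paper's: both rest on the eigenfunction identity $Kg=\nu_i^2 g$ (resp.\ $Kg=\nu^2 g$) followed by integration by parts, reducing the gradient pairing to $\nu_i^2\int_{\mathbb T^2}gh\,d\mathbf x=0$. One immaterial inaccuracy: $\mathcal V_i$ is in fact a two-dimensional vector space rather than merely a cone, since $A\sin(\nu_i^{-1}x_i+\alpha)$ with $A\geq 0$, $\alpha\in\mathbb R$ parametrizes the full span of $\sin(\nu_i^{-1}x_i)$ and $\cos(\nu_i^{-1}x_i)$, but as you note this plays no role in the argument.
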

\begin{proof}
Fix $i\in\{1,2\}$. Observe that
\[-\Delta v=\nu_{i}^{-2}v,\quad \forall\,v\in\mathcal V_{i},\]
which implies that
\[Kv=\nu_{i}^{2}v,\quad \forall\,v\in\mathcal V_{i}.\]
Hence for   $g\in\mathcal V_{i}$ and $h\in\mathcal V_{i}^{\perp}$, by integration by parts
\[\int_{\mathbb T^{2}}\nabla Kg\cdot\nabla Kh d\mathbf x=\int_{\mathbb T^{2}} (-\Delta Kh) Kg  d\mathbf x=\int_{\mathbb T^{2}} hKg  d\mathbf x=\nu_{i}^{2}\int_{\mathbb T^{2}} gh  d\mathbf x=0.\]
Hence \eqref{fam1} has been proved. The proof of \eqref{fam2} is similar when $\nu_1=\nu_2=\nu$.
\end{proof}

The following lemma  is mainly used to illustrate Remark \ref{wen1} in Section 1.
\begin{lemma}\label{den1}
Consider a smooth Euler flow on $\mathbb T^{2}$. Let $\tilde{\mathbf v}$, $\tilde \psi $ and $\omega $   be the normalized velocity, the normalized stream function and the vorticity,   respectively. Then   for any $1<p<+\infty,$  it holds that
\begin{equation*}
 \|\tilde\psi\|_{W^{2,p}(D)}\lesssim \|\omega\|_{L^{p}(D)} |\lesssim  \|\tilde\psi\|_{W^{2,p}(D)},
\end{equation*}
\begin{equation*}
 \|\tilde\psi\|_{W^{2,p}(D)}\lesssim \|\tilde{\mathbf v}\|_{W^{1,p}(D)} \lesssim\|\tilde\psi\|_{W^{2,p}(D)}.
\end{equation*}
Here $A\lesssim B$ means $A\leq CB$ for some positive constant $C$ depending only on $\nu_{1}, \nu_{2}$ and $p$.

\end{lemma}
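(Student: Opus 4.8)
The plan is to reduce both norm equivalences to the two pointwise identities $\omega = -\Delta\tilde\psi$ from \eqref{tpsi} and $\tilde{\mathbf v} = \nabla^\perp\tilde\psi$ from \eqref{nsf1}, after which each inequality becomes either a one-line differentiation estimate or a direct appeal to the elliptic regularity bound \eqref{bddo} of Lemma \ref{bsl99}. Since the flow is smooth and all three functions are $2\pi\nu_1$- and $2\pi\nu_2$-periodic, I first identify the $W^{k,p}(D)$ norms with the corresponding norms over $\mathbb T^2$, using the characterization of $W^{k,p}(\mathbb T^2)$ recorded in Section 2; this lets me apply Lemma \ref{bsl99} verbatim.

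For the first chain, I note that $\omega = -\Delta\tilde\psi$ integrates to zero over $\mathbb T^2$, so $\omega \in \mathring L^p(\mathbb T^2)$ and $\tilde\psi = K\omega$. The estimate $\|\tilde\psi\|_{W^{2,p}} \lesssim \|\omega\|_{L^p}$ is then exactly \eqref{bddo}. The reverse bound $\|\omega\|_{L^p} \lesssim \|\tilde\psi\|_{W^{2,p}}$ follows immediately from $\omega = -(\partial_{x_1 x_1}\tilde\psi + \partial_{x_2 x_2}\tilde\psi)$ and the triangle inequality.

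For the second chain, I use $\tilde{\mathbf v} = (\tilde v_1, \tilde v_2) = (\partial_{x_2}\tilde\psi, -\partial_{x_1}\tilde\psi)$. Every component of $\tilde{\mathbf v}$ is a first-order derivative of $\tilde\psi$ and every first-order derivative of a component of $\tilde{\mathbf v}$ is a second-order derivative of $\tilde\psi$, which yields $\|\tilde{\mathbf v}\|_{W^{1,p}} \lesssim \|\tilde\psi\|_{W^{2,p}}$ at once. Conversely, $|\nabla\tilde\psi| = |\tilde{\mathbf v}|$ pointwise gives $\|\nabla\tilde\psi\|_{L^p} = \|\tilde{\mathbf v}\|_{L^p}$, and the identities $\partial_{x_1 x_1}\tilde\psi = -\partial_{x_1}\tilde v_2$, $\partial_{x_2 x_2}\tilde\psi = \partial_{x_2}\tilde v_1$, $\partial_{x_1 x_2}\tilde\psi = \partial_{x_1}\tilde v_1$ express each second-order derivative of $\tilde\psi$ as a first-order derivative of a component of $\tilde{\mathbf v}$, so that these are all controlled by $\|\tilde{\mathbf v}\|_{W^{1,p}}$. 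Since $\tilde\psi$ has zero mean over $\mathbb T^2$, the Poincar\'e inequality bounds $\|\tilde\psi\|_{L^p}$ by $\|\nabla\tilde\psi\|_{L^p}$, and combining these three estimates gives $\|\tilde\psi\|_{W^{2,p}} \lesssim \|\tilde{\mathbf v}\|_{W^{1,p}}$.

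I expect the only genuinely nontrivial step to be the bound $\|\tilde\psi\|_{W^{2,p}} \lesssim \|\omega\|_{L^p}$: controlling the full Hessian of $\tilde\psi$ by the single scalar $\Delta\tilde\psi$ is a Calder\'on--Zygmund type estimate and is precisely where the restriction $1 < p < +\infty$ enters. As this is already furnished by Lemma \ref{bsl99}, the remaining work is entirely elementary, resting on differentiation and the Poincar\'e inequality.
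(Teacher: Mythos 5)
Your proposal is correct and follows essentially the same route as the paper: the paper's proof simply observes that $\tilde\psi$ solves the Poisson problem \eqref{ppoop} and declares both chains ``straightforward consequences'' of the elliptic estimate \eqref{bddo}, which is exactly the skeleton you use. Your write-up merely fills in the elementary details the paper leaves implicit (the triangle inequality for $\omega=-\Delta\tilde\psi$, the pointwise identities relating $\nabla^\perp\tilde\psi$ to $\tilde{\mathbf v}$, and the Poincar\'e inequality for the zero-mean $\tilde\psi$), all of which are valid.
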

\begin{proof}
Recall that $\tilde\psi$ and $\omega$ satisfy (see Section 1)
   \begin{equation}\label{ppoop}
   \begin{cases}
  -\Delta \tilde\psi=\omega, &\mathbf x\in\mathbb T^2,\\
\int_{\mathbb T^{2} } \tilde\psi d\mathbf x=0.
  \end{cases}
 \end{equation}
 Then the desired estimates are straightforward consequences of the estimate \eqref{bddo}.
\end{proof}

\subsection{Energy-enstrophy inequalities}

In this subsection, we deduce several energy-enstrophy type inequalities for functions in $\mathring{L}^2(\mathbb T^2)$  based on Fourier series expansion.

Recall that   $E$ and $Z$ are defined by   \eqref{deoe} and  \eqref{deoz}, respectively.
It is easy to check that $E $ is well defined in $\mathring L^p(\mathbb T^2)$ for any $1<p<+\infty,$ and $Z$ is well defined in $L^2(\mathbb T^2).$

\begin{lemma}\label{lem666}
Let $f\in \mathring{L}^2(\mathbb T^2)$. Then the following assertions hold.
 \begin{itemize}
 \item[(i)] If $\nu_1<\nu_2$, then
 \[E(f)= \nu_2^{2}  Z(f),\quad \forall\, f\in\mathcal V_2,\]
 \[E(f)\leq \max\left\{ \nu_{1}^{2}, \frac{\nu_{2}^{2}}{4}   \right\}Z(f),\quad \forall\,f\in\mathcal V_2^\perp.\]
  \item[(ii)] If $\nu_1>\nu_2$, then
 \[E(f)= \nu_1^{2}  Z(f),\quad \forall\, f\in\mathcal V_1,\]
 \[E(f)\leq \max\left\{ \nu_{2}^{2}, \frac{\nu_{1}^{2}}{4}   \right\}Z(f),\quad \forall\,f\in\mathcal V_1^\perp.\]
 \item[(iii)] If $\nu_1=\nu_2=\nu$, then
 \[E(f)=\nu^{2}Z(f),\quad \forall\,f\in\mathcal V,\]
 \[E(f)\leq \frac{\nu^{2}}{4}Z(f),\quad\forall\,f\in\mathcal V^\perp.\]
 \end{itemize}

 \end{lemma}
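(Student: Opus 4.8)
The plan is to work entirely in Fourier space, since the operators $E$ and $Z$ diagonalize with respect to the orthonormal basis $\{\zeta_{\mathbf k}\}$. Writing $f\sim\sum_{\mathbf k}\hat f_{\mathbf k}\zeta_{\mathbf k}$ with $\hat f_{\mathbf 0}=0$ (as $f\in\mathring L^2$), formula \eqref{qiud8} gives $K\zeta_{\mathbf k}=\mu_{\mathbf k}^{-1}\zeta_{\mathbf k}$, where $\mu_{\mathbf k}=(k_1/\nu_1)^2+(k_2/\nu_2)^2$ is the eigenvalue of $-\Delta$. Using the second expression for $E$ in \eqref{deoe} together with the Plancherel identity, I would record the two diagonal formulas
\begin{equation*}
Z(f)=\frac12\sum_{\mathbf k\neq\mathbf 0}|\hat f_{\mathbf k}|^2,\qquad
E(f)=\frac12\sum_{\mathbf k\neq\mathbf 0}\frac{|\hat f_{\mathbf k}|^2}{\mu_{\mathbf k}}.
\end{equation*}
From these it is immediate that for any admissible $\mathbf k$ one has the two-sided bound $\mu_{\max}^{-1}\,|\hat f_{\mathbf k}|^2\le \mu_{\mathbf k}^{-1}|\hat f_{\mathbf k}|^2$, so the whole statement reduces to identifying which eigenvalues $\mu_{\mathbf k}$ actually occur in the Fourier support of $f$ in each of the three cases, and then comparing $1/\mu_{\mathbf k}$ against $Z$.

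For the equality assertions, I would first observe that any $f\in\mathcal V_i$ is an eigenfunction of $-\Delta$ with eigenvalue $\nu_i^{-2}$, so its Fourier support sits entirely on $\{\mu_{\mathbf k}=\nu_i^{-2}\}$; hence every surviving term satisfies $\mu_{\mathbf k}^{-1}=\nu_i^2$, and $E(f)=\nu_i^2 Z(f)$ follows term by term. The case $\nu_1=\nu_2=\nu$ is identical with $\mathcal V$ in place of $\mathcal V_i$ and $\nu^2$ in place of $\nu_i^2$.

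For the inequality on the orthogonal complement, the key point is that membership in $\mathcal V_i^\perp$ forces the Fourier coefficients supported on the least-eigenvalue sphere $J_{\lambda_1}$ to vanish, so in the series for $f\in\mathcal V_2^\perp$ (case (i), short torus, $\lambda_1=\nu_2^{-2}$) the indices $(0,\pm1)$ are excluded. I would then maximize $\mu_{\mathbf k}^{-1}$ over the remaining admissible $\mathbf k$: the two cheapest remaining eigenvalues are $\nu_1^{-2}$ (from $(\pm1,0)$) and $4\nu_2^{-2}$ (from $(0,\pm2)$), giving the competing reciprocals $\nu_1^2$ and $\nu_2^2/4$, whence $\mu_{\mathbf k}^{-1}\le\max\{\nu_1^2,\nu_2^2/4\}$ for every $\mathbf k$ in the support, and the bound $E(f)\le\max\{\nu_1^2,\nu_2^2/4\}Z(f)$ follows by summing. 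Cases (ii) and (iii) are handled the same way, with the square-torus case simplifying because the next sphere after $\nu^{-2}$ has $\mu_{\mathbf k}\ge 2\nu^{-2}$ but the relevant competitor $(1,1)$ gives $\mu^{-1}=\nu^2/2$—so I must be careful here and check which off-sphere index genuinely minimizes $\mu_{\mathbf k}$; the clean bound $\nu^2/4$ in (iii) should come from noting that after removing $\{(\pm1,0),(0,\pm1)\}$ the smallest surviving eigenvalue is $4\nu^{-2}$ from $(0,\pm2),(\pm2,0)$, since $(\pm1,\pm1)$ already lies in $\mathcal V^\perp$ yet contributes $\mu^{-1}=\nu^2/2$—so I expect the true obstacle to be verifying that $\mathcal V^\perp$ actually excludes the $(\pm1,\pm1)$ modes as well, i.e.\ that $\mathcal V$ is exactly the span of the four least eigenfunctions and the full least-eigenvalue sphere contains only $(\pm1,0),(0,\pm1)$. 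I would confirm this from $J_{\lambda_1}$ in case (iii), which shows the sphere is precisely those four points, so $(\pm1,\pm1)$ has $\mu=2\nu^{-2}$ and is \emph{not} in the least sphere; thus the genuinely delicate step is pinning down which modes survive in $\mathcal V^\perp$ and confirming that the next available reciprocal eigenvalue is indeed $\nu^2/4$ rather than $\nu^2/2$. If the sharp constant requires $(\pm1,\pm1)$ to be absent, I would need to check directly that those modes lie in $\mathcal V$ or are otherwise forbidden; this is the one place where a naive Fourier-support argument could give a weaker constant, so I would verify it explicitly.
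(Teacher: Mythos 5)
Your Fourier-diagonalization route is exactly the paper's own proof: the paper derives the same two diagonal formulas \eqref{hae2}--\eqref{hae3} (via Parseval, after computing the Fourier coefficients of $\partial_{x_i}Kf$ by integration by parts) and then bounds the reciprocal eigenvalues over the Fourier support allowed on the orthogonal complement. Your treatment of cases (i) and (ii) matches the paper step for step, including the identification of the competing modes $(\pm1,0)$ and $(0,\pm2)$.

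The point you flag in case (iii), however, is not a delicate step you still need to check --- your own computation has already settled it, and it goes the other way: the constant $\nu^2/4$ in the statement is incorrect. As you verified, on the square torus $J_{\lambda_1}=\{(\pm1,0),(0,\pm1)\}$, so $\mathcal V$ is spanned by precisely those four modes, and nothing forces $\hat f_{\mathbf k}$ to vanish at $\mathbf k=(\pm1,\pm1)$ for $f\in\mathcal V^\perp$. Concretely, $f(\mathbf x)=\cos\left(\frac{x_1+x_2}{\nu}\right)$ lies in $\mathcal V^\perp$, satisfies $-\Delta f=\frac{2}{\nu^2}f$, and hence $E(f)=\frac{\nu^2}{2}Z(f)>\frac{\nu^2}{4}Z(f)$. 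The sharp constant on $\mathcal V^\perp$ is therefore $\nu^2/2$ (the second distinct eigenvalue on the square torus is $2/\nu^2$, coming from the diagonal modes, not $4/\nu^2$), and your argument proves (iii) with $\nu^2/2$ in place of $\nu^2/4$; no argument can do better, so you should commit to that conclusion rather than leave it as an open verification. The paper misses this because it declares the proof of (iii) ``almost identical'' to that of (i), which is exactly where the slip hides: on the short torus every competitor with $|k_1|\geq 1$ already has $\mu_{\mathbf k}\geq\nu_1^{-2}$, whereas on the square torus the modes $(\pm1,\pm1)$ sit strictly between the least sphere and $(0,\pm2)$. The error is harmless downstream: the proof of Proposition \ref{varcha}(iii) and the appendix proof of Theorem \ref{thm0}(iii) only require some constant strictly smaller than $\nu^2$ on $\mathcal V^\perp$, and $\nu^2/2$ serves that purpose.
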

 \begin{proof}
 First we show that for any $f\in \mathring{L}^2(\mathbb T^2)$,
\begin{equation}\label{hae1}
\|\partial_{x_{i}}Kf\|^2_{L^2(\mathbb T^2)}=\sum_{\mathbf k\in\mathbb Z^2,\mathbf k\neq\mathbf 0}\frac{\left( \frac{k_i}{\nu_i}\right)^2|\hat f_{\mathbf k}|^2}{\left[\left( \frac{k_1}{\nu_1}\right)^2+\left( \frac{k_2}{\nu_2}\right)^2\right]^2},\quad i=1,2.
\end{equation}
To prove \eqref{hae1}, we first show that the Fourier series of $\partial_{x_{i}}Kf$ has the form:
\begin{equation}\label{nik1}
\partial_{x_{i}} Kf\thicksim \sum_{\mathbf k\in\mathbb Z^2}c_{\mathbf k}\zeta_{\mathbf k},\quad c_{\mathbf k}=\begin{cases}
0&\mbox{if } \mathbf k=\mathbf 0,\\
\frac{i\left(\frac{k_i}{\nu_i}\right)}{ \left( \frac{k_1}{\nu_1}\right)^2+\left( \frac{k_1}{\nu_1}\right)^2}\hat f_{\mathbf k}&\mbox{if } \mathbf k\neq\mathbf 0.
\end{cases}
\end{equation}
In fact, since the integral of $\partial_{x_{i}}Kf$ on $\mathbb T^2$ is   zero, we have $c_{\mathbf 0}=0$; for $\mathbf k\neq \mathbf 0$, by integration by parts,
\begin{align*}
\bar c_{\mathbf k}=\int_{\mathbb T^2}(\partial_{x_{i}}Kf)  \zeta_{\mathbf k} d\mathbf x&=-\int_{\mathbb T^2} (Kf)  \partial_{x_{i}}\zeta_{\mathbf k}d\mathbf x
\\
&=-i\left(\frac{k_i}{\nu_i}\right)\int_{\mathbb T^2} (Kf)   \zeta_{\mathbf k}d\mathbf x\\
&= -i\left(\frac{k_i}{\nu_i}\right) \left[ \left(\frac{k_1}{\nu_1}\right)^2+\left(\frac{k_2}{\nu_2}\right)^{2}\right]^{-1}\int_{\mathbb T^2}( Kf)(-\Delta\zeta_{\mathbf k})d\mathbf x\\
&=-i\left(\frac{k_i}{\nu_i}\right) \left[ \left(\frac{k_1}{\nu_1}\right)^2+\left(\frac{k_2}{\nu_2}\right)^{2}\right]^{-1}\int_{\mathbb T^2}(-\Delta Kf) \zeta_{\mathbf k}d\mathbf x\\
&=-i\left(\frac{k_i}{\nu_i}\right) \left[ \left(\frac{k_1}{\nu_1}\right)^2+\left(\frac{k_2}{\nu_2}\right)^{2}\right]^{-1}\int_{\mathbb T^2}f \zeta_{\mathbf k} d\mathbf x\\
&=-i\left(\frac{k_i}{\nu_i}\right) \left[ \left(\frac{k_1}{\nu_1}\right)^2+\left(\frac{k_2}{\nu_2}\right)^{2}\right]^{-1}\hat f_{\mathbf k}.
\end{align*}
Here we used \eqref{qiud} and \eqref{qiud8}. Hence \eqref{nik1} has been proved.
From \eqref{nik1}, we can apply Parseval's identity (see \cite{Gra}, Proposition 3.2.7) to obtain
\[\|\partial_{x_{i}}Kf\|^2_{L^2(\mathbb T^2)}=\sum_{\mathbf k\in\mathbb Z^2} |c_{\mathbf k}|^2=\sum_{\mathbf k\in\mathbb Z^2,\mathbf k\neq\mathbf 0}\frac{\left(\frac{k_i}{\nu_i}\right)^2|\hat f_{\mathbf k}|^2}{\left[\left(\frac{k_1}{\nu_1}\right)^2+\left(\frac{k_2}{\nu_2}\right)^2\right]^2},\]
which is exactly \eqref{hae1}.

As a consequence of \eqref{hae1}, we obtain
 \begin{equation}\label{hae2}
 E(f)= \frac{1}{2}\|\nabla Kf\|_{L^2(\mathbb T^2)}^2=\frac{1}{2}\sum_{\mathbf k\in\mathbb Z^2,\mathbf k\neq\mathbf 0}\frac{ |\hat f_{\mathbf k}|^2}{ \left(\frac{k_1}{\nu_1}\right)^2+\left(\frac{k_2}{\nu_2}\right)^2 },\quad \forall\,f\in \mathring{L}^2(\mathbb T^2).
 \end{equation}
 Besides, by Parseval's  identity, the enstrophy can also be expressed in terms of Fourier coefficients:
 \begin{equation}\label{hae3}
 Z(f)=\frac{1}{2}\|f\|_{L^2(\mathbb T^2)}^2=\frac{1}{2}\sum_{\mathbf k\in\mathbb Z^2,\mathbf k\neq\mathbf 0} |\hat f_{\mathbf k}|^2,\quad \forall\,f\in  \mathring{L}^2(\mathbb T^2).
 \end{equation}

Below we prove the lemma based on \eqref{hae2} and \eqref{hae3}.
We only prove (i), since the proofs of (ii) and (iii)  are almost identical to that of (i). Notice that $f\in\mathcal V_2$ if and only if $\hat f_{\mathbf k}=0$ for $\mathbf k\neq ( 0,\pm1)$, and  $f\in\mathcal V^\perp_2$ if and only if $\hat f_{\mathbf k}=0$ for $\mathbf k=(0,\pm1)$.
Hence for   $f\in \mathcal V_2,$
 \[E(f) =\frac{1}{2}\sum_{\mathbf k=( 0,\pm1)} \frac{ |\hat f_{\mathbf k}|^2}{ \left(\frac{k_1}{\nu_1}\right)^2+ \left(\frac{k_2}{\nu_2}\right)^2 }=\frac{1}{2}\nu_{2}^{2}\sum_{\mathbf k=( 0,\pm1)}   |\hat f_{\mathbf k}|^2 =\nu_{2}^{2}Z(f).\]
For  $f\in \mathcal V_2^\perp,$
\begin{equation}\label{mlw0}
E(f) =\frac{1}{2}\sum_{\mathbf k\neq(0,\pm1),\mathbf k\neq \mathbf 0} \frac{ |\hat f_{\mathbf k}|^2}{  \left(\frac{k_1}{\nu_1}\right)^2+ \left(\frac{k_2}{\nu_2}\right)^2 }.
\end{equation}
We claim that
\begin{equation}\label{mlw}
\left(\frac{k_1}{\nu_1}\right)^2+ \left(\frac{k_2}{\nu_2}\right)^2\geq \min\left\{\frac{1}{\nu_1^2}, \frac{4}{\nu_2^2}\right\},\quad \forall\, \mathbf k\neq (0,\pm1), \mathbf k\neq\mathbf 0.
\end{equation}
In fact, notice that  $\mathbf k\neq (0,\pm1),$ $ \mathbf k\neq\mathbf 0$ if and only if
\[|k_1|\geq 1 \quad \mbox{or}\quad |k_2|\geq 2.\]
If $|k_1|\geq 1$,  then
\[\left(\frac{k_1}{\nu_1}\right)^2+ \left(\frac{k_2}{\nu_2}\right)^2\geq \frac{1}{\nu_1^2};\]
if $|k_2|\geq 2$, then
\[\left(\frac{k_1}{\nu_1}\right)^2+ \left(\frac{k_2}{\nu_2}\right)^2\geq \frac{4}{\nu_2^2}.\]
Hence \eqref{mlw} follows.
Combining \eqref{mlw0} and \eqref{mlw}, we have  that
  \begin{align*}
  E(f) &\leq \frac{1}{2} \sum_{\mathbf k\neq(0,\pm1),\mathbf k\neq \mathbf 0} \frac{ |\hat f_{\mathbf k}|^2}{ \min\left\{\frac{1}{\nu_1^2}, \frac{4}{\nu_2^2}\right\}}\\
  &= \frac{1}{2}\max\left\{\nu_{1}^{2},\frac{\nu_{2}^{2}}{4}  \right\} \sum_{\mathbf k\neq(0,\pm1),\mathbf k\neq \mathbf 0}   |\hat f_{\mathbf k}|^2 \\
  &=   \max\left\{ \nu_{1}^{2},\frac{\nu_{2}^{2}}{4}    \right\}Z(f).
  \end{align*}
 \end{proof}

 \section{Variational characterizations}

 Throughout this section, let  $A, B\geq 0$ be fixed.  Denote
\begin{equation}\label{jk1}
v_{A,i}=A\sin\left(\frac{x_i}{\nu_i}\right),\quad i=1,2,
\end{equation}
\begin{equation}\label{jk2}
v_{A,B}=A\sin\left(\frac{x_1}{\nu}\right)+B\sin\left(\frac{x_2}{\nu}\right)\quad\mbox{if }\nu_1=\nu_2=\nu.
\end{equation}
 Denote by $\mathcal R_{A,i}, \mathcal R_{A,B}$ the set of rearrangements of $v_{A,i}, v_{A,B}$ on $\mathbf T^2$, respectively, i.e.,
 \begin{equation}\label{jk3}
 \mathcal R_{A,i}=\mathcal R(v_{A,i}),\quad i=1,2,
 \end{equation}
\begin{equation}\label{jk4}
\mathcal R_{A,B}=\mathcal R(v_{A,B})\quad\mbox{if }\nu_1=\nu_2=\nu.
\end{equation}
 It is easy to check that  $\mathcal V_{A,i}\subset \mathcal R_{A,i}$, $i=1,2,$ and   $\mathcal V_{A,B}\subset\mathcal R_{A,B}$ if $\nu_1=\nu_2=\nu.$

Consider the following variational problems:
\begin{equation}\label{jk5}
\mathsf m_{A,i}=\sup_{v\in\mathcal R_{A,i}}E(v),\quad i=1,2,
\end{equation}
\begin{equation}\label{jk6}
\mathsf m_{A,B}=\sup_{v\in\mathcal R_{A,B}}E(v)\quad \mbox{if } \nu_1=\nu_2=\nu.
\end{equation}

 Our aim in this section is to prove the following variational characterizations for $\mathcal V_{A,1}$, $\mathcal V_{A,2}$ and $\mathcal R_{A,B}$.

\begin{proposition}\label{varcha}
Let $\mathcal R_{A,1},\mathcal R_{A,2},\mathcal R_{A,B}$ be defined by \eqref{jk3}, \eqref{jk4}, and $\mathsf m_{A,1},\mathsf m_{A,2}, \mathsf m_{A,B}$ be defined by \eqref{jk5}, \eqref{jk6}.
\begin{itemize}
\item[(i)] If $\nu_1<\nu_2$, then
\[\mathcal V_{A,2}=\{v\in\mathcal R_{A,2}\mid E(v)=\mathsf m_{A,2}\}.\]
\item [(ii)]If $\nu_1>\nu_2$, then
\[\mathcal V_{A,1}=\{v\in\mathcal R_{A,1}\mid E(v)=\mathsf m_{A,1}\}.\]
\item[(iii)] If $\nu_1=\nu_2=\nu$, then
\[\mathcal V_{A,B}=\{v\in\mathcal R_{A,B}\mid E(v)=\mathsf m_{A,B}\}.\]
\end{itemize}
\end{proposition}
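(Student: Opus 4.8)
The plan is to recast each variational problem as an energy--enstrophy comparison across the orthogonal splitting of $\mathring{L}^2(\mathbb T^2)$ induced by the relevant eigenspace, and then to use the conserved distribution function to pin down amplitudes. I describe case (i) in detail; cases (ii) and (iii) run identically until the final step. The first observation is that enstrophy is constant along a rearrangement class: every $v\in\mathcal R_{A,2}$ shares the $L^2$ norm of $v_{A,2}$, so $Z(v)=Z(v_{A,2})=:Z_0$ for all such $v$. Thus maximizing $E$ over $\mathcal R_{A,2}$ amounts to maximizing the ``energy-to-enstrophy ratio'' subject to the fixed profile.

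Next I would write the $L^2$-orthogonal decomposition $v=v'+v''$ with $v'\in\mathcal V_2$ (recall $\mathcal V_2$ is the two-dimensional eigenspace spanned by $\sin(\nu_2^{-1}x_2),\cos(\nu_2^{-1}x_2)$) and $v''\in\mathcal V_2^\perp$. Lemma \ref{fam} shows these two spaces are also orthogonal for the energy bilinear form $\int_{\mathbb T^2}\nabla Kv'\cdot\nabla Kv''\,d\mathbf x=0$, so both invariants split: $Z(v)=Z(v')+Z(v'')$ and $E(v)=E(v')+E(v'')$. Applying Lemma \ref{lem666}(i) gives $E(v')=\nu_2^2 Z(v')$ exactly, together with $E(v'')\le\kappa\,Z(v'')$ where $\kappa:=\max\{\nu_1^2,\nu_2^2/4\}$. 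Since $\nu_1<\nu_2$ forces $\kappa<\nu_2^2$, summing yields $E(v)\le\nu_2^2 Z(v')+\kappa Z(v'')\le\nu_2^2\big(Z(v')+Z(v'')\big)=\nu_2^2 Z_0$, with equality if and only if $Z(v'')=0$, i.e. $v''\equiv0$, i.e. $v\in\mathcal V_2$. Because $\mathcal V_{A,2}\subset\mathcal R_{A,2}$ with every element lying in $\mathcal V_2$ and hence attaining the bound, this simultaneously identifies $\mathsf m_{A,2}=\nu_2^2 Z_0$ and shows the maximizer set is exactly $\mathcal R_{A,2}\cap\mathcal V_2$.

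It then remains to compute $\mathcal R_{A,2}\cap\mathcal V_2$. Any element of $\mathcal V_2$ equals $a\sin(\nu_2^{-1}x_2+\alpha)$ for some $a\ge0$, $\alpha\in\mathbb R$; for fixed $a$ all of these share one distribution function (phase translation is measure preserving), and that distribution function is determined by, and determines, the amplitude $a$. Hence membership in $\mathcal R_{A,2}$ forces $a=A$, giving $\mathcal R_{A,2}\cap\mathcal V_2=\mathcal V_{A,2}$, as claimed. Case (ii) is the mirror image by the rotational symmetry already noted after Theorem \ref{thm0}.

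I expect this last identification to be the main obstacle, and it is genuinely more delicate in case (iii). There a competitor in $\mathcal V$ has the form $a\sin(\nu^{-1}x_1+\alpha)+b\sin(\nu^{-1}x_2+\beta)$ with $a,b\ge0$, and the single $L^2$ datum no longer fixes $(a,b)$. I would instead match moments of the conserved distribution: the second moment fixes $a^2+b^2$, and since the mixed terms in $\int_{\mathbb T^2}v^4\,d\mathbf x$ vanish by orthogonality of the two one-dimensional factors, the fourth moment then fixes $a^2b^2$, so that $\{a^2,b^2\}=\{A^2,B^2\}$. One must finally account for the coordinate swap $(x_1,x_2)\mapsto(x_2,x_1)$, which is measure preserving on the square torus and interchanges $v_{A,B}$ with $v_{B,A}$; consequently $\mathcal R_{A,B}\cap\mathcal V=\mathcal V_{A,B}\cup\mathcal V_{B,A}$, and the stated identification with $\mathcal V_{A,B}$ is to be understood up to this symmetry (the two pieces coincide when $A=B$ and are otherwise bounded away from each other in $L^p$, precisely the separation that the continuity-based stability argument of Section 5 exploits).
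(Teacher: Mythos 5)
Your proposal is correct and is essentially the paper's own argument: the same $L^2$-orthogonal splitting into the least eigenspace and its complement, additivity of $Z$ and (via Lemma \ref{fam}) of $E$, then Lemma \ref{lem666} together with the strict gap $\max\{\nu_1^2,\nu_2^2/4\}<\nu_2^2$ to force the complementary component to vanish at any maximizer, and finally rearrangement invariants to identify the amplitudes. The only real divergence is the last step of case (iii): the paper uses the invariants $\|v\|_{L^\infty(\mathbb T^2)}$ and $\|v\|_{L^2(\mathbb T^2)}$ to get $C+D=A+B$ and $C^2+D^2=A^2+B^2$, whereas you use the second and fourth moments. Your route works, but one intermediate claim is wrong as stated: writing $v=as_1+bs_2$ with $s_i=\sin(x_i/\nu+\cdot)$, the odd mixed terms $\int_{\mathbb T^2}s_1^3s_2\,d\mathbf x$ and $\int_{\mathbb T^2}s_1s_2^3\,d\mathbf x$ do vanish, but the even one does not: $6a^2b^2\int_{\mathbb T^2}s_1^2s_2^2\,d\mathbf x=6\pi^2\nu^2a^2b^2$. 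This is harmless, since the surviving mixed term is itself a multiple of $a^2b^2$, so the second and fourth moments still jointly determine $(a^2+b^2,\,a^2b^2)$ and hence $\{a^2,b^2\}=\{A^2,B^2\}$; just correct the computation rather than appeal to full orthogonality. Finally, your conclusion that the maximizer set in case (iii) is $\mathcal V_{A,B}\cup\mathcal V_{B,A}$, not $\mathcal V_{A,B}$ alone when $A\neq B$, is exactly what the paper's own proof establishes (the displayed statement of Proposition \ref{varcha}(iii) glosses over this point), and the separation of the two components for $A\neq B$ is precisely the content of Lemma \ref{pddd9} that Section 5 invokes to pass from stability of the union to stability of each piece.
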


\begin{proof}

First we prove (i).   Let $v_{A,2}$ be given by \eqref{jk1}.  Denote
\[ Z_A=Z(v_{A,2}).\]
Then $ Z(v )=Z_A$ for any $v\in \mathcal R_{A,2}.$
Decompose any $v\in\mathcal R_{A,2}$ into two components:
\begin{equation}\label{cbnj}
v= \bar v+\tilde v, \quad \bar v\in \mathcal V_2,\,\,\tilde v\in\mathcal V_2^\perp.
\end{equation}
It is clear that
\begin{equation}\label{lb1}
Z(\bar v)+Z(\tilde v)=Z_A.
\end{equation}
Using Lemma \ref{fam}, we have that
\begin{equation}\label{lb2}
\begin{split}
E(v)&=\frac{1}{2}\int_{L^2(\mathbb T^2)} |\nabla Kv |^2d\mathbf x\\
&=\frac{1}{2}\int_{L^2(\mathbb T^2)} |\nabla K\bar v |^2d\mathbf x+ \int_{L^2(\mathbb T^2)}  \nabla K\bar v\cdot\nabla K\tilde v d\mathbf x+\frac{1}{2}\int_{L^2(\mathbb T^2)} |\nabla K\tilde v |^2d\mathbf x\\
&=E(\bar v)+E(\tilde v).
\end{split}
\end{equation}
Recalling Lemma \ref{lem666}(i), it holds that
\begin{equation}\label{lb3}
E(\bar v)=\nu_2^2Z(\bar v),\quad E(\tilde v)\leq  \max\left\{ \nu_{1}^{2},\frac{\nu_{2}^{2}}{4}    \right\}Z(\tilde v).
\end{equation}
Combining \eqref{lb1}-\eqref{lb3}, we obtain
\begin{equation}\label{lb4}
E( v)\leq \nu_2^2Z_A,
\end{equation}
and the  equality holds if and only if $v\in\mathcal V_2$. In other words, we have proved that
\[\mathsf m_{A,2}=\nu_2^2Z_A,\]
and, moreover, for any $v\in\mathcal R_{A,2}$,    $E(v)=\mathsf m_{A,2}$ if and only if $v\in\mathcal V_2$.  To finish the proof of (i), it is sufficient to show that
\[\mathcal V_2\cap \mathcal R_{A,2}=\mathcal V_{A,2}.\]
The inclusion $\mathcal V_{A,2}\subset \mathcal V_2\cap \mathcal R_{A,2}$ is obvious. To prove the inverse inclusion, it is sufficient to show that for any $v\in \mathcal V_2\cap \mathcal R_{A,2}$ with the form
\[v=B\sin\left(\frac{x_2}{\nu_2}+\beta\right)\]
for some $B\geq 0$ and $\beta\in\mathbb R,$
it holds that $B=A.$ This is obvious since
\[B=\|v\|_{L^\infty(\mathbb T^2)}=\|v_A\|_{L^\infty(\mathbb T^2)}=A.\]

The proof of (ii) is almost identical to that of (i), therefore we omit it.

Now we prove (iii).  Denote
\[Z_{A,B}=Z(v_{A,B}).\]
Then it is clear that $ Z(v)=Z_{A,B}$ for any $v\in \mathcal R_{A,B}.$
Analogously to \eqref{cbnj}, we decompose any $v\in\mathcal R_{A,B}$ into two components:
\[v= \bar v+\tilde v, \quad \bar v\in \mathcal V,\,\,\tilde v\in\mathcal V^\perp.\]
Then
\begin{equation}\label{igl1}
Z(\bar v)+Z(\tilde v)=Z_{A,B}.
\end{equation}
As in \eqref{lb2}, we can prove that
\begin{equation}\label{igl2}
E(v)=E(\bar v)+E(\tilde v).
\end{equation}
Moreover, by Lemma \ref{lem666}(iii),
\begin{equation}\label{igl3}
 E(\bar v)=\nu^2Z(\bar f),\quad E(\tilde v)\leq \frac{\nu^2}{4}Z(\tilde v).
\end{equation}
Therefore we infer from \eqref{igl1}-\eqref{igl3} that
\begin{equation}\label{igl4}
 E(v)\leq   {\nu^2} Z_{A,B},
\end{equation}
and the equality holds if and only if $v\in \mathcal V.$
Hence we have proved that
\[\mathsf m_{A,B}=\nu^2Z_{A,B},\]
and, moreover, for any $v\in\mathcal R_{A,B},$ $v$ is a maximizer of $E$ relative to $\mathcal R_{A,B}$ if and only if $v\in \mathcal V$. To finish the proof, it is sufficient to show that
\[\mathcal V\cap \mathcal R_{A,B}=\mathcal V_{A,B}\cup\mathcal V_{B,A}.\]
Since it is obvious that   $\mathcal V_{A,B}, \mathcal V_{B,A}\subset   \mathcal R_{A,B}$, it holds that
 $\mathcal V_{A,B}\cup\mathcal V_{B,A}\subset \mathcal V\cap \mathcal R_{A,B}$. To prove the inverse inclusion, it is sufficient to show that for any $v\in \mathcal V\cap \mathcal R_{A,B}$ with the form
\[v=C\sin\left(\frac{x_1}{\nu}+\alpha\right)+D\sin\left(\frac{x_2}{\nu}+\beta\right)\]
for some $C,D\geq0$ and $\alpha,\beta\in \mathbb R$, it holds that
 \begin{equation}\label{twis0}
A=C,\,B=D\quad\mbox{or}\quad A=D, \,B=C.
\end{equation}
To prove \eqref{twis0}, notice that for any $v\in\mathcal R_{A,B},$
\begin{equation}\label{rav1}
\|v\|_{L^\infty(\mathbb T^2)}=\|v_{A,B}\|_{L^\infty(\mathbb T^2)},\quad
\|v\|_{L^2(\mathbb T^2)}=\|v_{A,B}\|_{L^2(\mathbb T^2)},
\end{equation}
which implies that
 \begin{equation}\label{twis1}
 A+B=C+D,\quad
 A^2+B^2=C^2+D^2.
 \end{equation}
From \eqref{twis1}, we can easily obtain \eqref{twis0}.  In fact, \eqref{twis1} can be written as
 \begin{equation}\label{hxi1}
 A-C=D-B,\quad
(A-C)(A+C)=(D-B)(D+B).
 \end{equation}
 If $A-C =0$, then $A=C,$ $B=D$, hence \eqref{twis0}  holds; if $A-C\neq0$, then $A+C=D+B$, which together with $A+B=C+D$ gives $A=D, B=C$, hence \eqref{twis0} still holds.
\end{proof}

\begin{remark}
From the above proof, for $i=1,2,$ $\mathcal V_{A,i}$ is in fact the set of maximizers of $E$ relative to
\[\{v\in \mathring L^{p}(\mathbb T^{2})\mid Z(v)=Z_A\}.\]
However, $\mathcal V_{A,B}$ does not have such a characterization. To distinguish $\mathcal V_{A,B}$ from other sinusoidal flows, it is necessary to study their rearrangements.
\end{remark}

\section{Compactness}

Throughout this paper, let $A,B\geq0,$ $1<p<+\infty$ be fixed.

Our purpose in this section is to prove the following proposition, stating that any maximizing sequence for  the maximization problem  \eqref{jk5} or \eqref{jk6} is compact in $L^p(\mathbb T^2)$.

\begin{proposition}\label{compact}
Let $\mathcal R_{A,1},\mathcal R_{A,2},\mathcal R_{A,B}$ be defined by \eqref{jk3}, \eqref{jk4}, and $\mathsf m_{A,1},\mathsf m_{A,2}, \mathsf m_{A,B}$ be defined by \eqref{jk5}, \eqref{jk6}.
\begin{itemize}
\item[(i)] If $\nu_1<\nu_2$, then for any sequence $\{v_n\} \subset\mathcal R_{A,2}$ satisfying
\begin{equation}\label{bnr1}
\lim_{n\to+\infty}E(v_n)=\mathsf m_{A,2},
\end{equation}
there exists some subsequence of $\{v_n\},$ denoted by $\{v_{n_j}\}$, and some $\hat v\in\mathcal V_{A,2},$ such that $v_{n_j}\to \hat v$ in $L^p(\mathbb T^2)$ as $j\to+\infty$.

\item [(ii)] If $\nu_1>\nu_2$, then for any sequence $\{v_n\} \subset\mathcal R_{A,1}$ satisfying
\begin{equation}\label{bnr2}
\lim_{n\to+\infty}E(v_n)=\mathsf m_{A,1},
\end{equation}
there exists some subsequence of $\{v_n\},$ denoted by $\{v_{n_j}\}$, and some $\hat v\in\mathcal V_{A,1},$ such that $v_{n_j}\to \hat v$ in $L^p(\mathbb T^2)$ as $j\to+\infty$.
\item[(iii)] If $\nu_1=\nu_2=\nu$, then for any sequence $\{v_n\} \subset\mathcal R_{A,B}$ satisfying
\begin{equation}\label{bnr3}
\lim_{n\to+\infty}E(v_n)=\mathsf m_{A,B},
\end{equation}
there exists some subsequence of $\{v_n\},$ denoted by $\{v_{n_j}\}$, and some $\hat v\in\mathcal V_{A,B},$ such that $v_{n_j}\to \hat v$ in $L^p(\mathbb T^2)$ as $j\to+\infty$.
\end{itemize}

\end{proposition}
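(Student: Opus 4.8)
The plan is to establish compactness of maximizing sequences by exploiting the constraint structure: every $v_n$ lies in a fixed rearrangement class, so $Z(v_n)$ is constant and $\{v_n\}$ is bounded in $L^2(\mathbb T^2)$ (indeed in every $L^q$ since rearrangements share all $L^q$ norms, and $v_{A,i}, v_{A,B}$ are bounded functions). Boundedness in $L^2(\mathbb T^2)$ yields a weakly convergent subsequence $v_{n_j}\rightharpoonup \hat v$ in $L^2(\mathbb T^2)$; the first goal is to identify $\hat v$ as an element of the maximizer set from Proposition \ref{varcha}, and the second, harder goal is to upgrade weak convergence to strong $L^p$ convergence.

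\textbf{Identifying the weak limit.}
I would focus on case (i), since (ii) is identical by symmetry and (iii) requires only the obvious modifications. The key observation is that the functional $E$ is weakly continuous along this sequence: by Lemma \ref{sppp}, $E(v)=\frac12\int_{\mathbb T^2} vKv\,d\mathbf x$ is a quadratic form with $K$ a bounded, symmetric, positive operator (compact from $\mathring L^2$ to itself by elliptic regularity via \eqref{bddo}), so $v_{n_j}\rightharpoonup \hat v$ implies $Kv_{n_j}\to K\hat v$ strongly and hence $E(v_{n_j})\to E(\hat v)$. Combined with \eqref{bnr1} this gives $E(\hat v)=\mathsf m_{A,2}$. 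I then use lower semicontinuity of $Z$ under weak convergence, $Z(\hat v)\le \liminf Z(v_{n_j})=Z_A$, together with the energy-enstrophy split \eqref{lb2}--\eqref{lb3}: writing $\hat v=\bar v+\tilde v$ with $\bar v\in\mathcal V_2$, $\tilde v\in\mathcal V_2^\perp$, the chain $\mathsf m_{A,2}=E(\hat v)=\nu_2^2 Z(\bar v)+E(\tilde v)\le \nu_2^2 Z(\hat v)\le \nu_2^2 Z_A=\mathsf m_{A,2}$ forces equality throughout, so $\tilde v\equiv 0$, $\hat v\in\mathcal V_2$, and $Z(\hat v)=Z_A$. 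The amplitude identification as in Proposition \ref{varcha}(i) then places $\hat v\in\mathcal V_{A,2}$.

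\textbf{Upgrading to strong convergence — the main obstacle.}
This is the crux. The equality $Z(\hat v)=Z_A=\lim Z(v_{n_j})$ means the $L^2$ norms converge, $\|v_{n_j}\|_{L^2}\to\|\hat v\|_{L^2}$, which together with weak $L^2$ convergence upgrades automatically to strong convergence in $L^2(\mathbb T^2)$ (the uniform convexity / Radon--Riesz property of Hilbert space). From strong $L^2$ convergence I extract a further subsequence converging pointwise a.e. Then, to reach $L^p$ for the given $1<p<+\infty$, I invoke the rearrangement constraint once more: all $v_{n_j}$ share the distribution function of $v_{A,2}$, hence are uniformly bounded in $L^\infty(\mathbb T^2)$ by $\|v_{A,2}\|_{L^\infty}=A$, which gives equi-integrability of $\{|v_{n_j}|^p\}$ for every finite $p$; pointwise a.e. convergence plus uniform boundedness then yields $v_{n_j}\to\hat v$ in $L^p(\mathbb T^2)$ by dominated convergence (or Vitali's theorem). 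The delicate point I expect to guard carefully is the weak continuity of $E$: it relies essentially on the compactness of $K$, so I would record explicitly that $K:\mathring L^2(\mathbb T^2)\to\mathring L^2(\mathbb T^2)$ is compact, which follows from \eqref{bddo} with $p=2$ and the compact embedding $W^{2,2}(\mathbb T^2)\hookrightarrow L^2(\mathbb T^2)$; without this, weak upper semicontinuity of $E$ alone would not pin down the limit.
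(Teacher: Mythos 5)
Your treatment of cases (i) and (ii) is correct, but it follows a genuinely different route from the paper's. The paper never applies the energy--enstrophy inequalities to the weak limit $\hat v$ directly; instead it first proves that $\hat v$ actually belongs to $\mathcal R_{A,2}$ (not merely to its weak closure), and that is where all of its work lies: convexity of the weak closure (Lemma \ref{lem201}) plus a first-variation argument give $\int_{\mathbb T^2}vK\hat v\,d\mathbf x\le\int_{\mathbb T^2}\hat vK\hat v\,d\mathbf x$ for every $v\in\mathcal R_{A,2}$; Burton's Lemma \ref{lem202} produces $\tilde v\in\mathcal R_{A,2}$ maximizing $v\mapsto\int_{\mathbb T^2}vK\hat v\,d\mathbf x$ over the class; and the computation showing $E(\tilde v-\hat v)\le 0$, combined with the positive definiteness of $K$ (Lemma \ref{sppp}), forces $\tilde v=\hat v$. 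Only then are Proposition \ref{varcha} and uniform convexity of $L^p$ invoked. You replace this rearrangement machinery by the spectral-gap inequality of Lemma \ref{lem666} applied to $\hat v$ itself, weak lower semicontinuity of $Z$, the Radon--Riesz property in $L^2$, and the uniform $L^\infty$ bound to climb from $L^2$ to $L^p$. This is more elementary and perfectly rigorous in cases (i)--(ii), but it succeeds only because there the enstrophy alone pins down the maximizer inside the two-dimensional eigenspace $\mathcal V_2$. (Incidentally, your phrase ``amplitude identification as in Proposition \ref{varcha}(i)'' is not quite right: that identification uses the $L^\infty$ norm, which is available only for members of $\mathcal R_{A,2}$; what you actually have, and what suffices within $\mathcal V_2$, is the enstrophy identity $Z(\hat v)=Z_A$.)

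Case (iii) is where this difference bites, and your claim that it ``requires only the obvious modifications'' hides a genuine gap. There the spectral argument yields $\hat v\in\mathcal V$ and $Z(\hat v)=Z_{A,B}$; writing $\hat v=C\sin(x_1/\nu+\alpha)+D\sin(x_2/\nu+\beta)$, this gives only $C^2+D^2=A^2+B^2$, which does not identify the amplitudes (for instance $C=\sqrt{A^2+B^2}$, $D=0$ is compatible with it). In Proposition \ref{varcha}(iii) the amplitudes are pinned down by the two identities \eqref{twis1}, and the first of them, $C+D=A+B$, comes from the $L^\infty$ equality in \eqref{rav1}, which holds only for elements of $\mathcal R_{A,B}$. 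At the stage of your argument where you identify the limit, $\hat v$ is only a weak limit of elements of $\mathcal R_{A,B}$, and weak limits leave a rearrangement class in general, so this step is unavailable.

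The gap can be repaired inside your own framework by reordering. The Radon--Riesz step uses only $Z(\hat v)=Z_{A,B}$, which you do have; so first conclude $v_{n_j}\to\hat v$ strongly in $L^2$, hence in $L^p$ via the uniform bound $\|v_{n_j}\|_{L^\infty}\le A+B$, and pass to an a.e.-convergent subsequence. Since every $v_{n_j}$ has the distribution function of $v_{A,B}$, so does $\hat v$; thus $\hat v\in\mathcal R_{A,B}$, and now $\|\hat v\|_{L^\infty}=A+B$ gives $C+D=A+B$, which together with $C^2+D^2=A^2+B^2$ yields $\{C,D\}=\{A,B\}$. Note, however, that this places $\hat v$ only in $\mathcal V_{A,B}\cup\mathcal V_{B,A}$: when $A\neq B$ the constant sequence $v_n=B\sin(x_1/\nu)+A\sin(x_2/\nu)$ lies in $\mathcal R_{A,B}$, is maximizing, and converges to an element of $\mathcal V_{B,A}$, which by Lemma \ref{pddd9} is at positive $L^p$ distance from $\mathcal V_{A,B}$. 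This last point is not your error: the paper's own proof of Proposition \ref{varcha}(iii) identifies the maximizer set as $\mathcal V\cap\mathcal R_{A,B}=\mathcal V_{A,B}\cup\mathcal V_{B,A}$, and it is precisely Lemma \ref{pddd9} that absorbs this discrepancy later, in the proof of Theorem \ref{thm1}(iii). But a complete proof of part (iii) must make the union explicit rather than assert convergence into $\mathcal V_{A,B}$.
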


To prove Proposition \ref{compact},   we need several  lemmas.
\begin{lemma}\label{ym0}
 For any $1<p<+\infty,$ $E$ is sequentially weakly continuous in $\mathring L^{p}(\mathbb T^{2})$, i.e., if $v_n$ converges weakly to $\hat v$ in  $\mathring L^p(\mathbb T^2)$, then \[\lim_{n\to+\infty}E(v_n)=E(\hat v).\]
\end{lemma}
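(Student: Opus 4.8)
The plan is to prove Lemma~\ref{ym0} by exploiting the explicit Fourier-series representation of the energy functional $E$ derived in \eqref{hae2}, together with the crucial observation that $E$ is a \emph{compact} quadratic form on $\mathring L^2(\mathbb T^2)$. The key structural fact is that $E(f) = \tfrac12\langle Kf, f\rangle$ where $K = (-\Delta)^{-1}$ is, by Lemma~\ref{bsl99}, a bounded operator whose symbol decays like $|\mathbf k|^{-2}$; this decay is exactly what converts weak convergence into convergence of $E$.

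First I would reduce to the $L^2$ setting. Since $v_n \rightharpoonup \hat v$ weakly in $\mathring L^p(\mathbb T^2)$ and the relevant sequences $\{v_n\}$ arising in Proposition~\ref{compact} lie in a single rearrangement class $\mathcal R_{A,i}$ (or $\mathcal R_{A,B}$), they are uniformly bounded in $L^\infty(\mathbb T^2)$, hence bounded in $L^2(\mathbb T^2)$; so weak $L^p$ convergence upgrades to weak $L^2$ convergence along the sequence. Working in $\mathring L^2(\mathbb T^2)$, weak convergence means precisely that each Fourier coefficient converges, $(\hat v_n)_{\mathbf k} \to \hat{\hat v}_{\mathbf k}$ for every fixed $\mathbf k$, while $\sup_n \|v_n\|_{L^2(\mathbb T^2)} =: M < +\infty$. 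My goal is then to pass to the limit in the series
\[
E(v_n) = \frac{1}{2}\sum_{\mathbf k\neq\mathbf 0} \frac{|(\hat v_n)_{\mathbf k}|^2}{\left(\frac{k_1}{\nu_1}\right)^2+\left(\frac{k_2}{\nu_2}\right)^2}.
\]

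The heart of the argument is a split of this sum at a large frequency cutoff $N$. For the low-frequency part $|\mathbf k|_\infty \leq N$, the sum is finite, so termwise convergence of the $(\hat v_n)_{\mathbf k}$ gives convergence to the corresponding finite sum for $\hat v$. For the high-frequency tail $|\mathbf k|_\infty > N$, I bound the denominator below: there $\left(\frac{k_1}{\nu_1}\right)^2+\left(\frac{k_2}{\nu_2}\right)^2 \geq c N^2$ for a constant $c$ depending only on $\nu_1,\nu_2$, so the tail is controlled by $\frac{1}{2cN^2}\sum_{\mathbf k}|(\hat v_n)_{\mathbf k}|^2 \leq \frac{M^2}{2cN^2}$, uniformly in $n$, and the same bound holds for $\hat v$ (whose $L^2$ norm is at most $M$ by weak lower semicontinuity). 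Given $\varepsilon>0$, I first fix $N$ large enough that both tails are below $\varepsilon$, then take $n$ large so the low-frequency parts differ by less than $\varepsilon$; a triangle inequality then yields $|E(v_n)-E(\hat v)| < 3\varepsilon$ for $n$ large, which is the claim.

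The main obstacle I anticipate is purely bookkeeping rather than conceptual: one must verify that weak convergence in $\mathring L^p$ does indeed imply coefficientwise convergence together with a uniform $L^2$ bound, which requires using that the test functions $\zeta_{\mathbf k}$ lie in the dual $L^{p'}(\mathbb T^2)$ and that the sequence is bounded in $L^2$ (not merely $L^p$). This $L^2$ boundedness is automatic for the sequences in Proposition~\ref{compact} because rearrangements preserve the $L^\infty$ norm, but to state Lemma~\ref{ym0} cleanly for general weakly convergent sequences in $\mathring L^p$ one should either assume $p \geq 2$ or restrict attention to sequences that are also $L^2$-bounded; the compact-symbol tail estimate is otherwise robust. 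Everything else reduces to the elementary cutoff estimate above and the uniform lower bound on the denominator, both of which are routine.
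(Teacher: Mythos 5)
Your cutoff argument is sound where it applies, but it does not prove the lemma as stated: the statement covers every $1<p<+\infty$ and arbitrary weakly convergent sequences in $\mathring L^p(\mathbb T^2)$, whereas your proof needs the sequence to be bounded in $L^2(\mathbb T^2)$. You flagged this yourself, but it is the crux, not bookkeeping. For $1<p<2$ a weakly convergent sequence in $\mathring L^p$ need not be bounded in $L^2$ --- its terms need not even belong to $L^2$ --- and then both your use of Parseval's identity and the tail bound $\frac{1}{2cN^2}\sum_{\mathbf k}|(\hat v_n)_{\mathbf k}|^2\leq \frac{M^2}{2cN^2}$ collapse (the right-hand side is $+\infty$). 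Importing the $L^\infty$ bound from the rearrangement classes of Proposition \ref{compact} turns the lemma into a statement about those particular sequences, which is weaker than what is claimed. If you want to keep the Fourier route, the gap can be closed with the Hausdorff--Young inequality: for $1<p\leq 2$ one has $\|\hat v_n\|_{\ell^{p'}}\leq C\|v_n\|_{L^p(\mathbb T^2)}$ with $p'=p/(p-1)$, and H\"older with exponents $p'/2$ and $p'/(p'-2)$ bounds the tail by $C\|v_n\|_{L^p(\mathbb T^2)}^2\bigl(\sum_{|\mathbf k|_\infty>N,\,\mathbf k\neq\mathbf 0}\lambda_{\mathbf k}^{-p'/(p'-2)}\bigr)^{(p'-2)/p'}$, where $\lambda_{\mathbf k}=(k_1/\nu_1)^2+(k_2/\nu_2)^2$; since $2p'/(p'-2)>2$, this series converges and its tails tend to $0$ uniformly in $n$, which restores your three-term estimate without any $L^2$ hypothesis.

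For comparison, the paper's proof is operator-theoretic and covers the whole range of $p$ at once: by Lemma \ref{bsl99}, $K$ is bounded from $\mathring L^p(\mathbb T^2)$ to $\mathring W^{2,p}(\mathbb T^2)$, so $Kv_n\rightharpoonup K\hat v$ weakly in $W^{2,p}(\mathbb T^2)$; the embedding $W^{2,p}(\mathbb T^2)\hookrightarrow L^\infty(\mathbb T^2)$ is compact for every $p>1$ (here the two-dimensionality enters), so $Kv_n\to K\hat v$ strongly in $L^\infty(\mathbb T^2)$; then writing
\[
2E(v_n)-2E(\hat v)=\int_{\mathbb T^2}v_n\,(Kv_n-K\hat v)\,d\mathbf x+\int_{\mathbb T^2}(v_n-\hat v)\,K\hat v\,d\mathbf x,
\]
the first term is $o(1)$ because $\{v_n\}$ is bounded in $L^1(\mathbb T^2)$, and the second is $o(1)$ by weak convergence tested against the fixed function $K\hat v\in L^{p'}(\mathbb T^2)$. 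Your frequency cutoff is exactly a hands-on substitute for the compactness of $K$ (the decay of its symbol), but the operator formulation never invokes the Hilbert-space structure of $L^2$ and therefore yields the stated generality with far less work.
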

\begin{proof}
Since $K$ is bounded from $\mathring L^p(\mathbb T^2)$ to $\mathring W^{2,p}(\mathbb T^2)$,  $Kv_n$ converges weakly to $K\hat v$ in $ W^{2,p}(\mathbb T^2)$. Taking into account the fact that the embedding $ W^{2,p}(\mathbb T^2)\hookrightarrow L^\infty(\mathbb T^2)$ is compact, we further deduce that $Kv_n$ converges strongly to $K\hat v$ in $ L^\infty (\mathbb T^2)$. Hence the desired result follows immediately.
\end{proof}

\begin{lemma}[\cite{B1}, Theorem 6]\label{lem201}
Let  $\mathcal R(f_0) $ be set of  rearrangements  of some $f_0\in L^{p}(\mathbb T^{2})$  on $\mathbb T^{2}$, and $\overline {\mathcal R(f_0)}$ be the weak closure of $\mathcal R(f_0) $ in $L^p(\mathbb T^{2}).$ Then $\overline {\mathcal R(f_0)}$ is convex, i.e., $\theta f_1+(1-\theta)f_2\in \overline {\mathcal R(f_0)}$ whenever $f_1, f_2\in \overline {\mathcal R(f_0)}$ and $\theta\in[0,1].$
\end{lemma}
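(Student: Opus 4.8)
The plan is to realize $\overline{\mathcal R(f_0)}$ as an explicit convex set cut out by Hardy--Littlewood majorization inequalities, and then read off convexity for free. Set $M=|\mathbb T^2|$, and for $g\in L^p(\mathbb T^2)$ and $s\in[0,M]$ introduce the maximal functional
\[
\Phi_s(g)=\sup\left\{\int_E g\,d\mathbf x \ :\ E\subset\mathbb T^2\text{ measurable},\ |E|=s\right\}.
\]
The first ingredient is the classical representation $\Phi_s(g)=\int_0^s g^*(t)\,dt$, where $g^*$ is the decreasing rearrangement of $g$ onto $[0,M]$; the supremum is attained on a super-level set $\{g>c\}$, adjusting the measure on $\{g=c\}$ when no exact threshold has measure $s$. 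From this representation two structural facts are immediate and carry the whole argument: being a supremum of the weakly continuous linear functionals $g\mapsto\int_E g$, each $\Phi_s$ is \emph{convex} and sequentially \emph{weakly lower semicontinuous} on $L^p(\mathbb T^2)$.

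Next I would define the candidate set
\[
\mathcal C=\left\{g\in L^p(\mathbb T^2)\ :\ \Phi_s(g)\le\Phi_s(f_0)\ \ \forall\,s\in[0,M],\ \ \int_{\mathbb T^2}g\,d\mathbf x=\int_{\mathbb T^2}f_0\,d\mathbf x\right\},
\]
which says precisely that $g^*$ is majorized by $f_0^*$. Every $v\in\mathcal R(f_0)$ shares the distribution function of $f_0$, hence the same $g^*$ and the same integral, so $\Phi_s(v)=\Phi_s(f_0)$ for all $s$ and $\mathcal R(f_0)\subset\mathcal C$. The set $\mathcal C$ is convex (a linear equality constraint together with the sublevel constraints $\Phi_s(\cdot)\le\Phi_s(f_0)$ of the convex functions $\Phi_s$) and sequentially weakly closed (weak lower semicontinuity of each $\Phi_s$ and weak continuity of the integral). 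It is also norm-bounded: majorization plus the Hardy--Littlewood--P\'olya inequality applied to the convex function $t\mapsto|t|^p$ gives $\|g\|_{L^p(\mathbb T^2)}\le\|f_0\|_{L^p(\mathbb T^2)}$ for every $g\in\mathcal C$. Since $L^p$ is reflexive for $1<p<+\infty$, $\mathcal C$ is weakly compact, and therefore $\overline{\mathcal R(f_0)}\subset\mathcal C$.

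The main obstacle is the reverse inclusion $\mathcal C\subset\overline{\mathcal R(f_0)}$: one must show that every function majorized by $f_0$ (with equal integral) is a weak limit of genuine rearrangements of $f_0$. I would prove this by a two-level approximation. First approximate $f_0$ and a fixed $g\in\mathcal C$ in $L^p$ by simple functions whose level sets have measures that are equal rational multiples of $M$, reducing the majorization to a finite Hardy--Littlewood--P\'olya inequality between two real vectors. At the finite level, vector majorization is equivalent to membership in the convex hull of the coordinate permutations (Rado's theorem / the Birkhoff--von Neumann description of doubly stochastic matrices), and each permutation is realized by an honest rearrangement that permutes the equal-measure pieces of the partition. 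Subdividing the partition ever more finely turns these convex combinations into actual rearrangements that converge weakly to the given simple function, and a diagonal argument together with the weak compactness of $\mathcal C$ lets me pass from the simple-function approximants back to $g$.

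With $\overline{\mathcal R(f_0)}=\mathcal C$ established, the lemma is immediate: given $f_1,f_2\in\overline{\mathcal R(f_0)}=\mathcal C$ and $\theta\in[0,1]$, convexity of each $\Phi_s$ yields
\[
\Phi_s\big(\theta f_1+(1-\theta)f_2\big)\le\theta\,\Phi_s(f_1)+(1-\theta)\,\Phi_s(f_2)\le\Phi_s(f_0),
\]
while the integral constraint is preserved by linearity, so $\theta f_1+(1-\theta)f_2\in\mathcal C=\overline{\mathcal R(f_0)}$. Thus all the difficulty is concentrated in the reverse inclusion of the third paragraph; once the maximal representation of $\Phi_s$ is in hand, the remaining steps are soft consequences of convexity, weak lower semicontinuity, and the reflexivity of $L^p$.
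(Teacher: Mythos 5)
The paper does not actually prove this lemma---it is imported verbatim from Burton \cite{B1}, Theorem~6---and your argument reconstructs essentially the classical proof behind that citation: identify the weak closure with the Hardy--Littlewood majorization class $\{g:\Phi_s(g)\le\Phi_s(f_0)\ \forall s,\ \int_{\mathbb T^2}g=\int_{\mathbb T^2}f_0\}$, whose convexity and weak closedness are free because each $\Phi_s$ is a supremum of weakly continuous linear functionals, with all the genuine work in the reverse inclusion that every majorized function is a weak limit of rearrangements (Ryff-type theorem; in your version via equal-measure partitions, Rado/Birkhoff at the finite level, and a mixing-by-refinement construction, which is legitimate since equal-measure cells of $\mathbb T^2$ are measure-isomorphic). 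That hard direction is only outlined, but the outline is the standard correct one and each step (HLP bound $\|g\|_{L^p}\le\|f_0\|_{L^p}$, weak compactness from reflexivity, the diagonal pass from simple approximants of $f_0$ back to $f_0$ by composing with the same measure-preserving bijections) fills in routinely, so I see no substantive gap.
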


\begin{lemma}[\cite{B1}, Theorem 4]\label{lem202}
 Let $p^*=p/(p-1)$ be the H\"older conjugate of $p$.  Let $\mathcal R({f_0}),$ $\mathcal R({g_0})$ be sets of  rearrangements on $\mathbb T^{2}$ of some $f_{0}\in L^p(\mathbb T^{2})$ and some $g_{0}\in L^{q}(\mathbb T^{2})$, respectively. Then for any $\tilde g\in\mathcal R({g_0}),$ there exists $\tilde v\in \mathcal R({f_0})$, such that
\[\int_{\mathbb T^{2}} \tilde f \tilde gdx\geq \int_{\mathbb T^{2}}fgdx,\quad\forall\, f\in {\mathcal R}({f_0}),\,\,g\in {\mathcal R}({g_0}).\]
\end{lemma}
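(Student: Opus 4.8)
The plan is to reduce the two-function maximization to the one-dimensional theory of monotone rearrangements. Write $M=|\mathbb T^2|$ and, for $h\in L^1(\mathbb T^2)$, let $h^\Delta:[0,M]\to\mathbb R$ denote the decreasing rearrangement $h^\Delta(s)=\inf\{t\in\mathbb R\mid|\{h>t\}|\le s\}$. Membership in a rearrangement class is equivalent to equality of distribution functions, so every $f\in\mathcal R(f_0)$ satisfies $f^\Delta=f_0^\Delta$ and every $g\in\mathcal R(g_0)$ satisfies $g^\Delta=g_0^\Delta$ (recall $q=p^*$). I will show that the joint supremum
\[
I:=\sup_{f\in\mathcal R(f_0),\,g\in\mathcal R(g_0)}\int_{\mathbb T^2}fg\,d\mathbf x
\]
is finite, equals $\int_0^M f_0^\Delta g_0^\Delta\,ds$, and---crucially---is attained for the prescribed $\tilde g$ by a suitable co-monotone partner $\tilde f$. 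Granting this, the displayed inequality of the lemma is immediate.

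First I would establish the upper bound. By the Hardy--Littlewood rearrangement inequality, for all admissible $f,g$,
\[
\int_{\mathbb T^2}fg\,d\mathbf x\le\int_0^M f^\Delta(s)g^\Delta(s)\,ds=\int_0^M f_0^\Delta(s)g_0^\Delta(s)\,ds,
\]
and the right-hand side is finite since, by H\"older's inequality in the conjugate exponents $p,p^*$, it is bounded by $\|f_0\|_{L^p(\mathbb T^2)}\|g_0\|_{L^{p^*}(\mathbb T^2)}$. Thus $I\le\int_0^M f_0^\Delta g_0^\Delta\,ds$, a value that does not depend on the chosen representatives.

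For attainment I would use Ryff's representation theorem. Fix $\tilde g\in\mathcal R(g_0)$, so $\tilde g^\Delta=g_0^\Delta$. Since $(\mathbb T^2,d\mathbf x)$ is a non-atomic finite measure space, there is a measure-preserving map $\sigma:\mathbb T^2\to[0,M]$ with $\tilde g=g_0^\Delta\circ\sigma$ almost everywhere. Define $\tilde f=f_0^\Delta\circ\sigma$. Because $\sigma$ transports $d\mathbf x$ to Lebesgue measure on $[0,M]$ and $f_0^\Delta$ is equimeasurable with $f_0$, the function $\tilde f$ is equimeasurable with $f_0$, i.e. $\tilde f\in\mathcal R(f_0)$; and the change of variables induced by $\sigma$ yields
\[
\int_{\mathbb T^2}\tilde f\,\tilde g\,d\mathbf x=\int_{\mathbb T^2}\bigl(f_0^\Delta g_0^\Delta\bigr)\circ\sigma\,d\mathbf x=\int_0^M f_0^\Delta(s)g_0^\Delta(s)\,ds.
\]
Comparing with the upper bound gives $\int_{\mathbb T^2}\tilde f\,\tilde g\,d\mathbf x=I$, hence $\int_{\mathbb T^2}\tilde f\,\tilde g\,d\mathbf x\ge\int_{\mathbb T^2}fg\,d\mathbf x$ for every $f\in\mathcal R(f_0)$ and $g\in\mathcal R(g_0)$, which is the assertion.

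The step carrying the real content is the attainment, and specifically the availability of a single measure-preserving $\sigma$ realizing $\tilde g=g_0^\Delta\circ\sigma$. The delicate case is when $\tilde g$ has level sets of positive measure: there $g_0^\Delta$ is locally constant and $\sigma$ is far from unique, but the non-atomicity of Lebesgue measure on $\mathbb T^2$ is exactly what permits spreading each such plateau bijectively and measure-preservingly onto the corresponding interval of constancy of $g_0^\Delta$. A functional-analytic alternative, closer to Burton's original argument and using machinery already set up here, is to note that $\overline{\mathcal R(f_0)}$ is convex (Lemma \ref{lem201}) and weakly compact while $f\mapsto\int_{\mathbb T^2}f\tilde g\,d\mathbf x$ is weakly continuous because $\tilde g\in L^{p^*}(\mathbb T^2)$; the functional then attains its maximum over $\overline{\mathcal R(f_0)}$ at an extreme point, and one identifies the extreme points of $\overline{\mathcal R(f_0)}$ with $\mathcal R(f_0)$ to place the maximizer back in the rearrangement class.
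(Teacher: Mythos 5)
Your proof is correct, but there is nothing in the paper to compare it against: the paper does not prove this lemma, quoting it verbatim (typos included --- $L^{q}$ should read $L^{p^{*}}$, and the $\tilde v$ in the conclusion should be the $\tilde f$ appearing in the display) from Burton \cite{B1}, Theorem 4. Your main route is in substance Burton's original one: the Hardy--Littlewood inequality bounds $\int_{\mathbb T^{2}}fg\,d\mathbf x$ by $\int_{0}^{M}f_{0}^{\Delta}g_{0}^{\Delta}\,ds$ uniformly over both rearrangement classes, and a Ryff-type measure-preserving representation $\tilde g=g_{0}^{\Delta}\circ\sigma$ yields the co-monotone partner $\tilde f=f_{0}^{\Delta}\circ\sigma\in\mathcal R(f_{0})$ attaining that bound against the \emph{prescribed} $\tilde g$, which is exactly the two-class statement of the lemma. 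Two refinements are worth recording. First, your two black boxes need slightly more than you say: Hardy--Littlewood for \emph{signed} $f\in L^{p}$, $g\in L^{p^{*}}$ with your distribution-function definition of $h^{\Delta}$ is obtained from the nonnegative case by truncating and adding constants, using finiteness of the measure (the correction terms match on both sides because rearrangement preserves integrals); and Ryff's theorem requires the measure space to be standard (non-atomic, separable), not merely non-atomic --- automatic for $(\mathbb T^{2},d\mathbf x)$, but it is the standardness, not non-atomicity alone, that spreads the plateaus. Second, the functional-analytic alternative in your closing paragraph proves, as stated, strictly less than the lemma: maximizing $f\mapsto\int_{\mathbb T^{2}}f\tilde g\,d\mathbf x$ over $\overline{\mathcal R(f_{0})}$ and identifying extreme points only gives $\int_{\mathbb T^{2}}\tilde f\tilde g\,d\mathbf x\geq\int_{\mathbb T^{2}}f\tilde g\,d\mathbf x$ for all $f\in\mathcal R(f_{0})$ with the \emph{fixed} $\tilde g$; to dominate all pairs $(f,g)\in\mathcal R(f_{0})\times\mathcal R(g_{0})$ you would still have to identify the attained value with $\int_{0}^{M}f_{0}^{\Delta}g_{0}^{\Delta}\,ds$, i.e.\ re-import the Hardy--Littlewood attainment your main argument already provides, and the extreme-point theorem is itself heavy machinery, so that route is neither shorter nor more elementary. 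Amusingly, the weaker fixed-$\tilde g$ form is all the paper actually uses (see \eqref{reny7} in the proof of Proposition \ref{compact}), so either route suffices for the application, but only your main argument proves the lemma as stated.
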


Now we are ready to prove Proposition \ref{compact}.
\begin{proof}[Proof of Proposition \ref{compact}]

To prove (i), fix a sequence $\{v_n\}\subset \mathcal R_{A,2}$ such that \eqref{bnr1} holds. Obviously $\{v_n\}$ is bounded in $L^p(\mathbb T^2).$ Without loss of generality, we can assume, up to a subsequence, that $v_n$ converges weakly to some $\hat v\in \overline{\mathcal R_{A,2}}$ in $L^p(\mathbb T^2).$ Here $\overline{\mathcal R_{A,2}}$ is the weak closure of $ {\mathcal R}_{A,2}$ in $L^p(\mathbb T^2)$ as in Lemma \ref{lem201}.   By Lemma \ref{ym0},  we have that
\begin{equation}\label{reny0}
E(\hat v)=\mathsf m_{A,2}\geq E(v),\quad\forall\,v\in \overline{\mathcal R_{A,2}}.\end{equation}
Here we used the  fact that
\[\mathsf m_{A,2}=\sup_{v\in\mathcal R_{A,2}}E(v)=\sup_{v\in\overline{\mathcal R_{A,2}}}E(v).\]
By Lemma \ref{lem201}, $\overline{\mathcal R_{A,2}}$ is convex. Hence for any $v\in \mathcal R_{A,2}$ and $\theta\in[0,1],$
we have that $\theta v+(1-\theta)\hat v\in \overline{\mathcal R_{A,2}}.$ By \eqref{reny0}, $E(\theta v+(1-\theta)\hat v)$ attains its maximum value at $\theta=0$. Therefore we have that
\[\frac{d}{d\theta}E(\theta v+(1-\theta)\hat v)\bigg|_{\theta=0^+}\leq 0,\]
which gives
\begin{equation}\label{reny}
\int_{\mathbb T^2}vK\hat vd\mathbf x\leq \int_{\mathbb T^2}\hat vK\hat vd\mathbf x.
\end{equation}
Note that \eqref{reny} holds for any $v\in\mathcal R_{A,2}.$
By Lemma \ref{lem202}, there exists some $\tilde v\in\mathcal R_{A,2}$ such that
\begin{equation}\label{reny7}
\int_{\mathbb T^2} vK\hat vd\mathbf x\leq \int_{\mathbb T^2}\tilde vK\hat vd\mathbf x,\quad\forall\,v\in\mathcal R_{A,2}.
\end{equation}
By a simple approximation procedure, it is easy to show that \eqref{reny7} actually holds for any $v\in\overline{\mathcal R_{A,2}}.$
In particular,
\begin{equation}\label{reny9}
\int_{\mathbb T^2} \hat vK\hat vd\mathbf x\leq \int_{\mathbb T^2}\tilde vK\hat vd\mathbf x.
\end{equation}
Below we show that $\tilde v=\hat v.$
We compute as follows:
\begin{equation}\label{compp}
\begin{split}
 E(\tilde v-\hat v)&=E(\tilde v)+E(\hat v)-\int_{\mathbb T^2} \tilde vK\hat vd\mathbf x\\
&\leq E(\tilde v)+E(\hat v)-\int_{\mathbb T^2} \hat vK\hat vd\mathbf x\\
&=\frac{1}{2}E(\tilde v)-\frac{1}{2}E(\hat v)\\
&\leq 0.
\end{split}
\end{equation}
Here we used \eqref{reny0}, \eqref{reny9}, and  the fact that $K$ is symmetric (see Lemma \ref{sppp}).
Taking into account the fact that $K$ is positive definite (see Lemma \ref{sppp}), we infer from \eqref{compp} that  $\tilde v=\hat v.$  In particular, $\hat v \in\mathcal R_{A,2}.$ Furthermore, since $E(\hat v)=\mathsf m_{A,2}$ (recall \eqref{reny0}), we can apply Proposition \ref{varcha}(i) to obtain
\[\hat v\in\mathcal V_{A,2}.\]

To conclude, we have proved that  $v_n$, up to a subsequence, converges weakly to some $\hat v\in\mathcal V_{A,2}$ in $L^p(\mathbb T^2)$ as $n\to+\infty.$ In particular, $\|v_n\|_{L^p(\mathbb T^2)}=\|\hat v\|_{L^p(\mathbb T^2)}$ for all $n$.
By uniform convexity, we further deduce that $v_n$, up to a subsequence, in fact converges \emph{strongly} to  $\hat v$ in $L^p(\mathbb T^2)$ as $n\to+\infty.$ This completes the proof of (i).

The proofs of (ii)(iii) are almost identical to that of (i), we omit them therefore.

\end{proof}

\section{Proof of  Theorem \ref{thm1}}

With the variational characterizations for $\mathcal V_{A,1},$  $\mathcal V_{A,2}$ and  $\mathcal V_{A,B}$ established in Section 3, and the compactness proved in Section 4, we are ready to prove Theorem \ref{thm1} in this section.

Throughout this section, let $A,B\geq 0$ and $1<p<+\infty$ be fixed.

\begin{proof}[Proof of Theorem \ref{thm1}(i)]
Suppose by contradiction that $\mathcal V_{A,2}$ is not nonlinearly stable in the sense of \eqref{sfg1}. Then there exist  some $\varepsilon_0>0$, a sequence of smooth Euler flows on $\mathbb T^2$ with  vorticity $\{\omega^n\}$, and a sequence of times $\{t_n\}$, such that
\begin{equation}\label{cc1}
\lim_{n\to+\infty}\min_{v\in\mathcal V_{A,2}}\|\omega^n_0 -v\|_{L^p(\mathbb T^2)}=0,
\end{equation}
\begin{equation}\label{cc2}
\min_{v\in\mathcal V_{A,2}}\|\omega^n_{t_n}-v\|_{L^p(\mathbb T^2)}\geq \varepsilon_0,\quad\forall\,n.
\end{equation}
Here $\omega^n_t:=\omega^n(t,\cdot).$

It is easy to check that $\mathcal V_{A,2}$ is compact in $L^p(\mathbb T^2)$ (which can also be seen from Proposition \ref{compact}(i)), hence there exist  some subsequence of $\{\omega^n_0\}$, still denoted by $\{\omega^n_0\}$, and some $\bar\omega\in\mathcal V_{A,2}$, such that
\begin{equation}\label{foop0}
\lim_{n\to+\infty}\|\omega^n_0-\bar\omega\|_{L^p(\mathbb T^2)}=0.
\end{equation}
Consequently,
\begin{equation}\label{enc1}
\lim_{n\to+\infty}E(\omega^n_0)=E(\bar\omega)=\mathsf m_{A,2}.
\end{equation}
By  energy conservation, we get from \eqref{enc1} that
\begin{equation}\label{enc2}
\lim_{n\to+\infty}E(\omega^n_{t_n})=\lim_{n\to+\infty}E(\omega^n_0)= \mathsf m_{A,2}.
\end{equation}

Now we can easily get a contradiction if we only consider perturbed flows with vorticity on $\mathcal R_{A,2}$.
In fact, if   $\{\omega_{t_n}^n\}\subset \mathcal R_{A,2}$ for any $n$,  then we can choose $v_n=\omega_{t_n}^n$ in  Proposition \ref{compact}(i) (note that \eqref{bnr1} is satisfied by \eqref{enc2}) to deduce that
$\{\omega^n_{t_n}\}$, up to a subsequence, converges to some element in $\mathcal V_{A,2}$ in $L^p(\mathbb T^2)$ as $n\to+\infty$. This obviously contradicts   \eqref{cc2}.

To deal with the general case, we  need to introduce a sequence of ``followers" to  $\{\omega^n\}$ as in \cite{B5, BR}. For fixed $n$,  denote by $\mathbf v^n$ the velocity of the Euler flow with vorticity $\omega^n$. Then $\omega^{n}$ satisfies the following nonlinear transport equation  (see \cite{MB}, p. 20):
\begin{equation}\label{tspe2}
\begin{cases}
\partial_{t}\omega^{n}+\mathbf v^{n}\cdot\nabla\omega^{n}=0,&t>0,\,\mathbf x\in\mathbb T^{2},\\
 \omega^{n}(0,\cdot)=\omega^{n}_{0}.
\end{cases}
\end{equation}
Let $\zeta^{n}$ be the solution of the following  linear transport equation:
\begin{equation}\label{tspe}
\begin{cases}
\partial_{t}\zeta^{n}+\mathbf v^{n}\cdot\nabla\zeta^{n}=0,&t>0,\,\mathbf x\in\mathbb T^{2},\\
 \zeta^{n}(0,\cdot)=\bar\omega.
\end{cases}
\end{equation}
For simplicity, denote $\zeta^{n}_{t}=\zeta^{n}(t,\cdot)$. Since $\mathbf v^{n}$ is divergence-free, 
by the Liouville theorem (see \cite{MPu}, p. 48), it holds that 
\begin{equation}\label{foop1}
\zeta^n_t\in\mathcal R(\bar\omega)=\mathcal R_{A,2},\quad\forall\,t\geq 0,
\end{equation}
On the other hand, combining  \eqref{tspe2} and \eqref{tspe}, we see that $\zeta^{n}-\omega^{n}$ satisfies
 \begin{equation}\label{tspe3}
\begin{cases}
\partial_{t}(\zeta^{n}-\omega^{n})+\mathbf v^{n}\cdot\nabla(\zeta^{n}-\omega^{n})=0,&t>0,\,\mathbf x\in\mathbb T^{2},\\
(\zeta^{n}-\omega^{n})(0,\cdot)=\bar\omega-\omega^{n}_{0}.
\end{cases}
\end{equation}
Again, by the Liouville theorem,
\begin{equation}\label{foop2}
\zeta^n_t-\omega^n_t\in\mathcal R( \bar\omega-\omega^n_0),\quad\forall\,t\geq 0.
\end{equation}

Having introduced the sequence of  ``followers" $\{\zeta^n\}$, we are ready to deduce a contradiction.
By \eqref{foop0} and \eqref{foop2},
\begin{equation}\label{foop5}
\lim_{n\to+\infty}\|\zeta^n_{t_n}-\omega^n_{t_n}\|_{L^p(\mathbb T^2)}=0,
\end{equation}
which together with \eqref{enc2} implies that
\begin{equation}\label{foop6}
\lim_{n\to+\infty}E(\zeta^n_{t_n})= \mathsf m_{A,2}.
\end{equation}
To conclude, we have found a sequence $\{\zeta^n_{t_n}\}$ such that \eqref{foop1} and \eqref{foop6} hold. Applying Proposition \ref{compact}(i), we infer that
$\zeta^n_{t_n}$, up to a subsequence, converges to some $\tilde \omega\in \mathcal V_{A,2}$ in $L^p(\mathbb T^2)$ as $n\to+\infty$. Combining \eqref{foop5}, we deduce that $\omega^n_{t_n}$ converges to $\tilde \omega$ in $L^p(\mathbb T^2)$ as $n\to+\infty$, which obviously contradicts \eqref{cc2}.

\end{proof}

The proof of Theorem \ref{thm1}(ii) can also be proved similarly.

To prove Theorem \ref{thm1}(iii), we need the following lemma.
\begin{lemma}\label{pddd9}
If $\nu_1=\nu_2=\nu$ and $A\neq B$, then
\[\min_{u\in\mathcal V_{A,B},v\in\mathcal V_{B,A}}\|u-v\|_{L^p(\mathbb T^2)}>0.\]
\end{lemma}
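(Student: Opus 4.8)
The plan is to combine a compactness observation with an elementary separation-of-variables argument. First I would note that each of $\mathcal V_{A,B}$ and $\mathcal V_{B,A}$ is the image of the compact parameter torus $(\alpha,\beta)\in(\mathbb R/2\pi\mathbb Z)^2$ under the map
\[
(\alpha,\beta)\longmapsto A\sin\!\left(\frac{x_1}{\nu}+\alpha\right)+B\sin\!\left(\frac{x_2}{\nu}+\beta\right),
\]
which is continuous into $L^p(\mathbb T^2)$ (convergence of the phases yields uniform, hence $L^p$, convergence of the functions). Therefore both $\mathcal V_{A,B}$ and $\mathcal V_{B,A}$ are compact subsets of $L^p(\mathbb T^2)$, the product $\mathcal V_{A,B}\times\mathcal V_{B,A}$ is compact, and the continuous functional $(u,v)\mapsto\|u-v\|_{L^p(\mathbb T^2)}$ attains its minimum on it. Consequently the asserted minimum is a genuine minimum, and it is strictly positive if and only if the two sets are disjoint. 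Thus it suffices to prove that $\mathcal V_{A,B}\cap\mathcal V_{B,A}=\varnothing$ whenever $A\neq B$.

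To establish disjointness I would argue by contradiction: suppose some function lies in both sets, so that there exist phases $\alpha_1,\beta_1,\alpha_2,\beta_2\in\mathbb R$ with
\[
A\sin\!\left(\frac{x_1}{\nu}+\alpha_1\right)+B\sin\!\left(\frac{x_2}{\nu}+\beta_1\right)
=B\sin\!\left(\frac{x_1}{\nu}+\alpha_2\right)+A\sin\!\left(\frac{x_2}{\nu}+\beta_2\right)
\]
for a.e.\ $\mathbf x\in\mathbb T^2$. Grouping the $x_1$-dependent terms on one side and the $x_2$-dependent terms on the other gives an identity of the form $g(x_1)=h(x_2)$, where $g$ depends only on $x_1$ and $h$ only on $x_2$; hence both must equal a common constant $c$. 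Since $g$ is a sum of sines and so has zero mean over a full period $x_1\in[0,2\pi\nu]$, we conclude $c=0$, which forces
\[
A\sin\!\left(\frac{x_1}{\nu}+\alpha_1\right)=B\sin\!\left(\frac{x_1}{\nu}+\alpha_2\right)
\]
identically in $x_1$. Comparing amplitudes (for instance via the $L^\infty$ norm in $x_1$, recalling $A,B\geq 0$) yields $A=B$, contradicting the hypothesis $A\neq B$. This proves $\mathcal V_{A,B}\cap\mathcal V_{B,A}=\varnothing$, and together with the first paragraph completes the proof.

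There is no serious obstacle here; the lemma is elementary. The one point that genuinely must not be skipped is the compactness step, because disjointness of two sets alone does not give positive distance between them—two disjoint sets can be arbitrarily close—so the attainment of the minimum over the compact product is what upgrades emptiness of the intersection to a strictly positive bound. The hypothesis $A\neq B$ enters only at the final amplitude comparison, which is exactly where the two families are distinguished.
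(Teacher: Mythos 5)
Your proposal is correct and takes essentially the same route as the paper: compactness of $\mathcal V_{A,B}$ and $\mathcal V_{B,A}$ in $L^p(\mathbb T^2)$ reduces the claim to proving $\mathcal V_{A,B}\cap\mathcal V_{B,A}=\varnothing$, which is then settled by an elementary trigonometric argument using $A\neq B$. The only cosmetic difference is in the disjointness step, where the paper matches coefficients in the linearly independent system $\sin(x_i/\nu),\cos(x_i/\nu)$ to get $A=B$, while you separate variables, use the zero-mean property to kill the constant, and compare amplitudes; the two arguments are interchangeable.
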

\begin{proof}
Observe that
\[\mathcal V_{A,B}=\left\{a\sin\left(\frac{x_1}{\nu}\right)+b\cos\left(\frac{x_1}{\nu}\right)+c\sin\left(\frac{x_2}{\nu} \right)+d\cos\left(\frac{x_2}{\nu} \right) \mid a^2+b^2=A^2,\,c^2+d^2=B^2 \right\}.\]
Hence it is easy to see that $\mathcal V_{A,B}$ is compact in $L^p(\mathbb T^2).$ Similarly,  $\mathcal V_{B,A}$ is also  compact in $L^p(\mathbb T^2).$
To finish the proof, it is sufficient to show that $\mathcal V_{A,B}\cap  \mathcal V_{B,A}=\varnothing.$

Fix $u\in \mathcal V_{A,B}$, $v\in\mathcal V_{B,A}$.  Assume that $u,v$ has the form
\[u=A\sin\left(\frac{x_1}{\nu}+\alpha\right)+B\sin\left(\frac{x_2}{\nu}+\beta\right),\quad v=B\sin\left(\frac{x_1}{\nu}+\alpha'\right)+A\sin\left(\frac{x_2}{\nu}+\beta'\right),\]
where $\alpha,\beta,\alpha',\beta'\in\mathbb R.$ Then $u,v$ can be written as
\begin{align*}
u=A\cos\alpha\sin\left(\frac{x_1}{\nu}\right)+A\sin\alpha\cos\left(\frac{x_1}{\nu}\right)+B\cos\beta\sin\left(\frac{x_2}{\nu}\right)+B\sin\beta\cos\left(\frac{x_2}{\nu}\right),
\end{align*}
\begin{align*}
v=B\cos\alpha'\sin\left(\frac{x_1}{\nu}\right)+B\sin\alpha'\cos\left(\frac{x_1}{\nu}\right)+A\cos\beta'\sin\left(\frac{x_2}{\nu}\right)+A\sin\beta'\cos\left(\frac{x_2}{\nu}\right).
\end{align*}
If $u=v$, then we must have
\[A\cos\alpha=B\cos\alpha', \quad A\sin\alpha=B\sin\alpha',\quad B\cos\beta=A\cos\beta',\quad B\sin\beta=A\sin\beta',\]
which implies that  $A=B,$ a contradiction.
\end{proof}

 \begin{proof}[Proof of Theorem \ref{thm1}(iii)]

Following the proof of Theorem \ref{thm1}(i), we can show that $\mathcal V_{A,B}\cup\mathcal V_{B,A}$ is nonlinearly stable in the sense of \eqref{sfg1}.  If $A=B$, then $\mathcal V_{A,B}=\mathcal V_{A,B}\cup\mathcal V_{B,A}$  is nonlinearly stable in the sense of \eqref{sfg1}. If $A=B$, then by Lemma \ref{pddd9}  there is a positive distance between $\mathcal V_{A,B}$ and $\mathcal V_{B,A}$ in $L^p(\mathbb T^2)$, hence by continuity each of them is nonlinearly stable in the sense of \eqref{sfg1}.

 \end{proof}

 \appendix

 \section{Proof of Theorem \ref{thm0}}\label{appe1}

 In this appendix, we give the proof of Theorem \ref{thm0} based on the  energy-enstrophy  type  inequalities  established in Lemma \ref{lem666}.
 We only prove Theorem \ref{thm0}(i). The other two items can be proved in a similar manner.

For any Euler flow with vorticity $\omega$ and velocity mean vector $\mathbf b$,  since $E$ and $Z$ are both conserved quantities, we have that
\begin{equation}\label{gkl000}
\nu_2^2Z(\omega_t)- E(\omega_t)=\nu_2^2Z(\omega_t)- E(\omega_0),\quad\forall\,t\geq 0.
\end{equation}
Decompose $\omega_t$ into two components:
\[\omega_t=\bar \omega_t+\tilde \omega_t,\quad\bar\omega_t\in \mathcal V_2,\,\tilde \omega_t\in\mathcal V_2^\perp.\]
Using Lemma \ref{fam}, we infer from \eqref{gkl000} that
\begin{equation}\label{gkl1}
 \nu_2^2Z(\bar \omega_t)+\nu_2^2Z(\tilde \omega_t)-E(\bar \omega_t) -E(\tilde \omega_t)=\nu_2^2Z(\bar \omega_0)+\nu_2^2Z(\tilde \omega_0)-E(\bar \omega_0)-E(\tilde \omega_0),\quad\forall\,t\geq0.
\end{equation}
Applying Lemma \ref{lem666}(i), we have that
\begin{equation}\label{gkl2}
E(\bar\omega_t)= \nu_2^{2}  Z(\bar\omega_t),\quad \forall\,t\geq 0.
\end{equation}
\begin{equation}\label{gkl3}
E(\tilde \omega_t)\leq \max\left\{ \nu_{1}^{2}, \frac{\nu_{2}^{2}}{4}   \right\}Z(\tilde \omega_t),\quad
\forall\,t\geq 0.
\end{equation}
From \eqref{gkl1} and   \eqref{gkl2},   we have that
\begin{equation}\label{gkl4}
  \nu_2^2Z(\tilde \omega_t)-E(\tilde \omega_t)= \nu_2^2Z(\tilde \omega_0) -E(\tilde \omega_0),\quad\forall\,t\geq0.
\end{equation}
which together with \eqref{gkl3} gives
\begin{equation}\label{gkl5}
\begin{split}
  \left(\nu_2^2-\max\left\{ \nu_{1}^{2}, \frac{\nu_{2}^{2}}{4}   \right\}\right)Z(\tilde \omega_t)&\leq \nu_2^2Z(\tilde \omega_0) -E(\tilde \omega_0)\\
  &\leq \nu_2^2Z(\tilde \omega_0),\quad\forall\,t\geq0,
  \end{split}
\end{equation}
or equivalently,
\begin{equation}\label{gkl50}
Z(\tilde \omega_t)\leq C_{\nu_1,\nu_2}Z(\tilde \omega_0)\quad\forall\,t\geq0,\quad C_{\nu_1,\nu_2}:=  \left(1-\max\left\{\left(\frac{\nu_1}{\nu_2}\right)^2, \frac{1}{4}   \right\}\right)^{-1}>0.
\end{equation}
Taking into account the fact that
\[\min_{v\in\mathcal V_2}\|\omega_t-v\|_{L^2(\mathbb T^2)}=\|\tilde \omega_t\|_{L^2(\mathbb T^2)}=\sqrt{2Z(\tilde\omega_t)},\quad\forall\,t\geq 0,\]
we obtain the desired stability from \eqref{gkl50}  immediately.

\bigskip

 {\bf Acknowledgements:}
{G. Wang was supported by National Natural Science Foundation of China (12001135, 12071098) and China Postdoctoral Science Foundation (2019M661261, 2021T140163). B. Zuo was supported by National Natural Science Foundation of China (12101154).}

\phantom{s}
 \thispagestyle{empty}


\begin{thebibliography}{99}
\bibitem{Abe}
K. Abe and K. Choi,  Stability of Lamb dipoles, \textit{Arch. Ration. Mech. Anal.}, 244(2022), 877--917.
\bibitem{A1}
V. I. Arnold,  Conditions for nonlinear stability plane curvilinear flow of an idea fluid,
\textit{Sov. Math. Dokl.}, 6(1965), 773--777.


\bibitem{A2}
V. I. Arnold,  On an a priori estimate in the theory of hydrodynamical stability, \textit{Amer. Math. Soc. Transl.}, 79(1969), 267--269.

\bibitem{AK}
V. I. Arnold and B. A. Khesin, Topological methods in hydrodynamics, 2nd ed.,  Applied Mathematical Sciences 125,  Springer, Cham,  2021.

\bibitem{BG}
C. Bardos, Y. Guo, W. Strauss,  Stable and unstable ideal plane flows, \textit{ Chinese Ann. Math. Ser. B}, 23(2002),149--164.


\bibitem{BFY}
L. Belenkaya, S. Friedlander and V. Yudovich,
The unstable spectrum of oscillating shear flows, \textit{
SIAM J. Appl. Math.,} 59(1999), 1701--1715.

\bibitem{B1}
G. R. Burton, Rearrangements of functions, maximization of convex functionals, and vortex rings, \textit{Math. Ann.}, 276(1987), 225--253.


\bibitem{B5}
G. R. Burton, Global nonlinear stability for steady ideal fluid flow in bounded planar domains,
\textit{Arch. Ration. Mech. Anal.}, 176(2005), 149--163.



\bibitem{Bjde}
G. R. Burton, Compactness and stability for planar vortex-pairs with prescribed impulse, \textit{J. Differential Equations}, 270(2021), 547--572.



\bibitem{Bcmp}
G. R. Burton, H. J. Nussenzveig Lopes and M. C. Lopes Filho, Nonlinear stability for steady vortex pairs, \textit{Comm. Math. Phys.}, 324(2013), 445--463.

\bibitem{BN}
P. Butta and P. Negrini,   On the stability problem of stationary solutions for the Euler equation on a 2-dimensional torus, \textit{ Regul. Chaotic Dyn.}, 15(2010), 637--645.

\bibitem{CZ}
A. P. Calder\'on, A. P and A. Zygmund,  Singular integrals and periodic functions, \textit{Studia Math.}, 14(1954), 249--271.


\bibitem{CWCV}
D. Cao and G. Wang, Steady vortex patches with opposite rotation directions in a planar ideal fluid, \textit{Calc. Var. Partial Differential Equations,} 58(2019), Paper No. 75.


\bibitem{CWN}
D. Cao and G. Wang, Nonlinear stability of planar vortex patches in an ideal fluid, \textit{J. Math. Fluid Mech.}, 23(2021), Paper No. 58.



 \bibitem{CD}
 K. Choi and D.  Lim, Stability of radially symmetric, monotone vorticities of 2D Euler equations, \textit{Calc. Var. Partial Differential Equations}, 61(2022),  Paper No. 120.




 \bibitem{DW}
H. R. Dullin and J. Worthington,  
Stability results for idealized shear flows on a rectangular periodic domain, \textit{J. Math. Fluid Mech.}, 20(2018),  473--484.


\bibitem{Gra}
L. Grafakos,  Classical Fourier analysis, Third edition, \textit{Graduate Texts in Mathematics, Vol. 249.} Springer, New York (2014).


\bibitem{BR}
J. Batt and G. Rein,
A rigorous stability result for the Vlasov-Poisson system in three dimensions, \textit{
Ann. Mat. Pura Appl.}, 164(1993), 133--154.


\bibitem{FS}
S. Friedlander,  W. Strauss and M. Vishik,  Nonlinear instability in an ideal fluid, \textit{Ann. Inst. H. Poincar\'e. Anal. Non Lin\'eare.},14(1997),  187--209.

\bibitem{LY}
Y. Li,  On 2D Euler equations. I. On the energy-Casimir stabilities and the spectra for linearized 2D Euler equations, \textit{ J. Math. Phys.}, 41(2000), 728--758.

\bibitem{MB}
A. J. Majda and A. L. Bertozzi, Vorticity and incompressible flow, \textit{Cambridge Texts in Applied Mathematics, Vol. 27}, Cambridge University Press, 2002.



\bibitem{MPu}
C. Marchioro and M. Pulvirenti, Mathematical theory of incompressible noviscous fluids, Springer-Verlag, 1994.


\bibitem{MSi}
L. D. Mesalkin and J. G. Sinai,  Investigation of the stability of a stationary solution of a system of equations for the plane movement of an incompressible viscous liquid, \textit{Prikl. Mat. Meh.}  25, 1140--1143 (in Russian); translated as \textit{J. Appl. Math. Mech.} 25(1961), 1700--1705.


\bibitem{Ti}
E. C. Titchmarsh,  Eigenfunction expansions associated with second-order differential equations, Vol. 2, Oxford University Press, 1958.




 \bibitem{WGu1}
G. Wang,  Nonlinear stability of planar steady Euler flows associated with semistable solutions of elliptic problems, \textit{Trans. Amer. Math. Soc.}, 375(2022),  5071--5095.

 \bibitem{WGu2}
G. Wang, Stability of 2D steady Euler flows related to least energy solutions of the Lane-Emden equation, arXiv:2104.12406.


\bibitem{WS}
 D. Wirosoetisno and T. G. Shepherd,  Nonlinear stability of Euler flows in two-dimensional periodic domains, \textit{ Geophys. Astrophys. Fluid Dynam.}, 90(1999),  229--246.

\bibitem{WG0}
G. Wolansky, M. Ghil, An extension of Arnold's second stability theorem for the Euler equations, \textit{Phys. D}, 94(1996), 161--167.


\bibitem{WG}
G. Wolansky and M. Ghil,
Nonlinear stability for saddle solutions of ideal flows and symmetry breaking. \textit{Comm. Math. Phys.}, 193(1998), 713--736.




\end{thebibliography}
\end{document}